\newcommand{\ifelsarticle}[2]{#1}%
\newcommand{\ifelsarticle}[2]{#2}%
\newcommand{\Q}{\mathbb{Q}}
\newcommand{\N}{\mathbb{N}}
\newcommand{\Z}{\mathbb{Z}}
\newcommand{\Znn}{\Z^{n\times n}}
\newtheorem{proposition}{Proposition}
\newtheorem{definition}[proposition]{Definition}
\newtheorem{notation}[proposition]{Notation}
\newtheorem{theorem}[proposition]{Theorem}
\newtheorem{lemma}[proposition]{Lemma}
\renewcommand*{\thefootnote}{\dag}
\begin{document}

\ifelsarticle{%
\begin{frontmatter}
}{}
\title{Classes of Symmetric Cayley Graphs over Finite Abelian Groups of Degrees 4 and 6\footnote{A previous version of some of the results in this paper were first announced at the 2010 International Workshop on Optimal Interconnection Networks (IWONT 2010)}}
\author{Crist\'obal Camarero, Carmen Mart\'inez and Ram\'on Beivide}
\ifelsarticle{%
\address{Electronics and Computers Department. University of Cantabria}
}

\ifelsarticle{}{\maketitle}

\begin{abstract}

The present work is devoted to characterize the family of symmetric undirected Cayley graphs over finite Abelian groups for degrees 4 and 6.

\end{abstract}

\ifelsarticle{%
\begin{keyword}
Cayley graph\sep symmetry\sep automorphism group
\end{keyword}

\end{frontmatter}
}{}

\section{Introduction}

The Cayley graph over the group $\Gamma$ and generating set $S\subset \Gamma$, denoted by $Cay(\Gamma;S)$, is defined as the graph with vertex set $\Gamma$ and with set of adjacencies
$\{(x,xg)\mid x\in\Gamma,\ g\in S\}$. If $S = -S$, then the graph is undirected.
The present work is devoted to characterize the symmetric members of the family of undirected Cayley graphs over finite Abelian groups for degrees 4 and 6.
Since these graphs are known to be vertex-transitive \cite{Akers}, the characterization will be done by determining those being edge-transitive.\par

In this paper the matricial notation by Fiol \cite{Fiol} for Cayley graphs over finite Abelian groups will be used. Hence, in order to be self-contained let us establish in this section the notation, definitions and results that will be used in this paper.

\begin{notation} The following notation will be used throughout the article:
\begin{itemize}
\item Lower case letters denote integers: $a$, $b$, $\dotsc$
\item Bold font denotes integer column vectors: $\mathbf{v}$, $\mathbf{w}$, $\dotsc$
\item Capitals correspond to integral matrices: $M$, $P$, $\dotsc$
\item $\mathbf{e}_i$ denotes the vector with a $1$ in its $i$-th component and $0$ elsewhere.
\item $\mathcal{B}_{n} = \{\mathbf{e}_i \ | \ i=1, \dotsc, n\}$ denotes the $n$-dimensional orthonormal basis. Then, $\pm \mathcal{B}_{n} = \{\pm \mathbf{e}_i \ | \ i=1, \dotsc, n\}$.
\end{itemize}
\end{notation}

\begin{definition} \cite{Fiol} Let $M \in\mathbb{Z}^{n\times n}$
be a non-singular square matrix of dimension $n$. Two vectors $\mathbf{v}, \mathbf{w} \in \mathbb{Z}^{n}$ are congruent modulo $M$ if and only if we have
$\mathbf u = \begin{pmatrix} u_1\\ u_2\\ \vdots \\ u_n \\ \end{pmatrix} \in \mathbb{Z}^{n}$ such that: $\mathbf{v}-\mathbf{w} = u_1 \mathbf{m}_1 + u_2 \mathbf{m}_2 + \dotsb +u_n\mathbf{m}_n = M\mathbf u,$
where $\mathbf{m}_{j}$ denotes the $j$-th column of $M$. We will denote this congruence as $\mathbf{v} \equiv \mathbf{w} \pmod{M}$.
\end{definition}

Given a square non-singular integral matrix $M \in \mathbb{Z}^{n \times n}$, we will consider the Cayley graph $Cay(\mathbb{Z}^{n}/M\mathbb{Z}^{n}; \pm \mathcal{B}_{n})$, hence:

\begin{itemize}
\item The vertex set is $\mathbb{Z}^{n}/M\mathbb{Z}^{n} = \{\mathbf{v} \pmod{M} \ | \ \mathbf{v} \in \mathbb{Z}^{n}\}$.
\item Two vertices $\mathbf{v}$ and $\mathbf{w}$ are adjacent if and only if $\mathbf{v}-\mathbf{w} \equiv \pm \mathbf e_i \pmod{M}$ for some $i\in\{1, \dotsc, n\}$.
\end{itemize}

From here onwards, all matrices will be considered to be non-singular, unless the contrary is stated. Note that, since $\mathbb{Z}^{n}/M\mathbb{Z}^{n}$ has $|\det (M)|$ elements, this will be the number of nodes of $Cay(\mathbb{Z}^{n}/M\mathbb{Z}^{n}; \pm \mathcal{B}_{n})$. Moreover, since any vertex $\mathbf{v}$ is adjacent to $\mathbf{v} \pm \mathbf{e}_i \pmod{M}$, $Cay(\mathbb{Z}^{n}/M\mathbb{Z}^{n}; \pm \mathcal{B}_{n})$ is, in general, a regular graph of degree $2n$. Then, we call $n$ the \textsl{dimension} of the graph. Note that when $\mathbf e_i\equiv \pm \mathbf e_j\pmod M$ or $2\mathbf e_i\equiv \mathbf 0\pmod M$ for some $1\leq i,j\leq n$ then the degree of the Cayley graph is less than $2n$. In that case we can also consider the corresponding multigraph, which always has degree $2n$.
The hypercube could be considered as an extreme case since $\forall i \in \{1, \ldots , n\}, 2\mathbf e_i\equiv \mathbf 0\pmod M$ and therefore it has degree $n$.\par

The following result shows that considering $Cay(\mathbb{Z}^{n}/M\mathbb{Z}^{n}; \pm \mathcal{B}_{n})$ does not imply any loss of generality, that is,
any Cayley graph over a finite Abelian group is isomorphic to $Cay(\mathbb{Z}^{n}/M\mathbb{Z}^{n}; \pm \mathcal{B}_{n})$, for some matrix $M \in \mathbb{Z}^{n\times n}$.

\begin{theorem} For any connected Cayley graph $G$ over a finite Abelian group there is $M \in \mathbb{M}^{n \times n}$ non-singular such that $G \cong Cay(\mathbb{Z}^{n}/M\mathbb{Z}^{n}; \pm \mathcal{B}_{n}).$
\end{theorem}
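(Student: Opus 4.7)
The plan is to realize the given Cayley graph as a quotient of $\mathbb{Z}^n$ in the standard way and then read off the matrix $M$ from the kernel of the quotient map.

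First I would fix notation. Let $G = Cay(\Gamma; S)$ be a connected Cayley graph over a finite Abelian group $\Gamma$, with $S = -S$. Because $S$ is symmetric, choose $s_1,\dotsc,s_n \in \Gamma$ so that $S = \{\pm s_1,\dotsc,\pm s_n\}$ (allowing $s_i = -s_i$ when $2s_i=0$). Define the group homomorphism
\[
\phi\colon \mathbb{Z}^n \longrightarrow \Gamma,\qquad \phi(\mathbf{e}_i) = s_i,
\]
which exists and is unique because $\mathbb{Z}^n$ is free Abelian.

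Next I would verify the two crucial properties of $\phi$. Surjectivity follows immediately from the fact that $G$ is connected, which forces $S$ to generate $\Gamma$; thus $\phi(\mathbb{Z}^n)$ contains every generator of $\Gamma$. Since $\Gamma$ is finite, $\mathbb{Z}^n/\ker\phi \cong \Gamma$ is finite as well, so the subgroup $\Lambda := \ker\phi$ is a sublattice of $\mathbb{Z}^n$ of full rank $n$. By the standard structure theorem for sublattices of $\mathbb{Z}^n$ (equivalently, by the existence of a Hermite or Smith normal form), any such full-rank sublattice admits an integral basis $\mathbf{m}_1,\dotsc,\mathbf{m}_n$; assembling these as the columns of a matrix $M\in\mathbb{Z}^{n\times n}$ yields $\Lambda = M\mathbb{Z}^n$ with $M$ non-singular (indeed $|\det M| = [\mathbb{Z}^n:\Lambda] = |\Gamma|$).

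The induced isomorphism $\bar\phi\colon \mathbb{Z}^n/M\mathbb{Z}^n \to \Gamma$ carries $\mathbf{e}_i + M\mathbb{Z}^n$ to $s_i$, and therefore carries the connection set $\pm \mathcal{B}_n$ onto $S$. Since a group isomorphism between the vertex sets that matches connection sets is automatically a graph isomorphism of the corresponding Cayley graphs, we obtain
\[
G \cong Cay(\mathbb{Z}^n/M\mathbb{Z}^n;\pm \mathcal{B}_n),
\]
which is the required statement.

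The only subtle point I anticipate is bookkeeping in the degenerate cases where some $s_i=0$, some $s_i=-s_j$, or $2s_i=0$: these correspond exactly to the situations flagged in the paragraph preceding the theorem, where $\mathbf{e}_i\equiv\pm\mathbf{e}_j\pmod M$ or $2\mathbf{e}_i\equiv\mathbf{0}\pmod M$, so the isomorphism is preserved with the same multigraph convention on both sides. Everything else is a direct application of the fundamental theorem of finitely generated Abelian groups, and no single step should present a real obstacle.
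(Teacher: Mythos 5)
Your proof is correct, but it takes a genuinely different route from the paper. The paper argues by induction on the number of generators: it assumes a matrix $M_{n-1}$ realizing the Cayley graph on the first $n-1$ generators, takes $a$ to be the least positive integer with $a g_n = x_1 g_1 + \dotsb + x_{n-1} g_{n-1}$, and appends the column $(\mathbf{x}, a)^t$ to build an upper-triangular $M$ — essentially constructing a Hermite-normal-form basis of the relation lattice one column at a time. You instead package the whole argument into one conceptual step: the surjection $\phi\colon\mathbb{Z}^n\to\Gamma$ sending $\mathbf{e}_i\mapsto s_i$ has a full-rank kernel (because $\Gamma$ is finite), any basis of that kernel gives a non-singular $M$ with $M\mathbb{Z}^n=\ker\phi$, and the induced isomorphism $\mathbb{Z}^n/M\mathbb{Z}^n\cong\Gamma$ matches $\pm\mathcal{B}_n$ with $S$. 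What your approach buys is a cleaner, non-inductive argument that makes the identity $|\det M|=|\Gamma|$ and the \'{A}d\'{a}m-type nature of the isomorphism (a group isomorphism carrying connection set to connection set) explicit; what the paper's approach buys is an explicit triangular form for $M$ that can be computed generator by generator. Your closing remark about the degenerate cases ($s_i=0$, $s_i=\pm s_j$, $2s_i=0$) correctly matches the multigraph convention the paper sets up just before the theorem, so there is no gap there either.
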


\begin{proof}
Let the Cayley graph $G=Cay(\Gamma;\{g_1,\dotsc,g_n\})$ with $\Gamma$ Abelian and finite. We proceed by induction.
If $n=1$ then $M=\begin{pmatrix}o(g_1)\end{pmatrix}$.
Otherwise, let $M_{n-1}$ be such that $Cay(\mathbb{Z}/M_{n-1}\mathbb{Z}; \pm \mathcal{B}_{n-1})\cong Cay(\Gamma;\{g_1,\dotsc,g_{n-1}\})$
with an isomorphism $f(\mathbf e_i)=g_i$. Then, let $a$ be the minimum positive integer such that $ag_n=x_1g_1+x_2g_2+\dotsb +x_{n-1}g_{n+1}$ for integers $x_i$ (which exists because $\Gamma$ is finite).
Then $M=\begin{pmatrix}M_{n-1}&\mathbf x\\0&a\end{pmatrix}$ satisfies $Cay(\mathbb{Z}^{n}/M\mathbb{Z}^{n}; \pm \mathcal{B}_{n}) \cong G$.
\end{proof}

Hence, we will denote $Cay(\mathbb{Z}^{n}/M\mathbb{Z}^{n}; \pm \mathcal{B}_{n})$ by $\mathcal{G}(M)$. In \cite{Fiol} it was shown how performing different operations on matrix $M$
remains the same graph. The next definitions and results recall this fact.

\begin{definition} \label{def:rightequivalent} Let $M_1, M_2 \in \mathbb{Z}^{n \times n}$. Then, $M_1$ is \textsl{right equivalent} to $M_2$, denoted by $M_1 \cong M_2$, if and only if there exists a unit matrix $P \in \mathbb{Z}^{n \times n}$ such that $M_1 = M_2 P$.
\end{definition}

\begin{theorem}\label{theo:isoright} \cite{Fiol}. If matrices a pair of matrices $M_1, M_2 \in \mathbb{Z}^{n \times n}$ are right-equivalent, then $\mathcal{G}(M_1)$ and $\mathcal{G}(M_2)$ are isomorphic graphs.
\end{theorem}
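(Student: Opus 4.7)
The plan is to observe that right-equivalence actually yields two \emph{identical} Cayley graphs, not merely isomorphic ones, and then invoke the trivial identity isomorphism. The key algebraic fact is that a unit (i.e.\ unimodular) matrix $P\in\mathbb{Z}^{n\times n}$ has an inverse in $\mathbb{Z}^{n\times n}$, so $P\mathbb{Z}^{n}=\mathbb{Z}^{n}$.

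First I would write $M_1 = M_2 P$ with $P$ unimodular, and deduce
\[
  M_1\mathbb{Z}^{n} \;=\; M_2 P\mathbb{Z}^{n} \;=\; M_2\mathbb{Z}^{n}.
\]
Thus $\mathbb{Z}^{n}/M_1\mathbb{Z}^{n}$ and $\mathbb{Z}^{n}/M_2\mathbb{Z}^{n}$ are literally the same quotient group, so $\mathcal{G}(M_1)$ and $\mathcal{G}(M_2)$ share a common vertex set.

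Next I would verify that the adjacency relations also coincide. Two vertices $\mathbf{v},\mathbf{w}$ are adjacent in $\mathcal{G}(M_k)$ iff $\mathbf{v}-\mathbf{w}-(\pm\mathbf{e}_i)\in M_k\mathbb{Z}^{n}$ for some $i$; since $M_1\mathbb{Z}^{n}=M_2\mathbb{Z}^{n}$, this condition is independent of $k$. The generating set $\pm\mathcal{B}_n$ does not depend on the matrix at all, so the identity map on $\mathbb{Z}^{n}/M_1\mathbb{Z}^{n}=\mathbb{Z}^{n}/M_2\mathbb{Z}^{n}$ is a graph isomorphism (in fact an equality of graphs).

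There is no real obstacle here; the only subtlety worth mentioning explicitly is justifying $P\mathbb{Z}^{n}=\mathbb{Z}^{n}$ for unimodular $P$, which follows because both $P$ and $P^{-1}$ have integer entries, so $P\mathbb{Z}^{n}\subseteq\mathbb{Z}^{n}$ and $\mathbb{Z}^{n}=P(P^{-1}\mathbb{Z}^{n})\subseteq P\mathbb{Z}^{n}$.
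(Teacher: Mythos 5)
Your proof is correct. The paper itself gives no argument for this theorem (it is cited from \cite{Fiol}), and your observation that right-equivalence leaves the lattice $M\mathbb{Z}^n$ — and hence the quotient group, the generating set $\pm\mathcal{B}_n$, and the graph itself — literally unchanged is exactly the standard justification; the verification that $P\mathbb{Z}^n=\mathbb{Z}^n$ for a unit matrix $P$ is the only point requiring care, and you handle it correctly.
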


%
%

\begin{definition}A \textsl{signed permutation matrix} is a matrix with entries in
$\{-1,0,1\}$ which has exactly one $\pm 1$ in each row and column.\end{definition}

Note that  in $\Znn$ the signed permutation matrices are exactly the unitary matrices, this is,
the matrices $U$ such $UU^t=I$. They are related to permutations in the way that for each permutation $\sigma\in \Sigma_n$ there is a unique signed permutation matrix $P_\sigma$ such that
$$P_\sigma\begin{pmatrix}v_1\\\vdots\\v_n\end{pmatrix}=
\begin{pmatrix}\pm v_{\sigma(1)}\\\vdots\\\pm v_{\sigma(n)}\end{pmatrix}.$$

\begin{theorem}\label{theo:isoleft}\cite{Fiol} If $P$ is a signed permutation matrix then $\mathcal G(PM)\cong \mathcal G(M)$.
\end{theorem}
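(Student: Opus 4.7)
The plan is to exhibit an explicit isomorphism $\phi : \mathcal G(M) \to \mathcal G(PM)$ induced by left multiplication by $P$ on $\Z^n$. Specifically, I would set $\phi(\mathbf v + M\Z^n) = P\mathbf v + PM\Z^n$. The first step is to check that $\phi$ is a well-defined bijection of vertex sets. For well-definedness, if $\mathbf v \equiv \mathbf v' \pmod M$, then $\mathbf v - \mathbf v' = M\mathbf u$ for some $\mathbf u \in \Z^n$, hence $P\mathbf v - P\mathbf v' = PM\mathbf u$, so $P\mathbf v \equiv P\mathbf v' \pmod{PM}$. Bijectivity is immediate because $P$ is invertible over $\Z$ (a signed permutation matrix has inverse $P^t$), so left multiplication by $P$ is a $\Z$-module automorphism of $\Z^n$ that carries the sublattice $M\Z^n$ onto $PM\Z^n$, and therefore descends to a bijection of the quotients.

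The second step is adjacency preservation. Vertices $\mathbf v, \mathbf w$ are adjacent in $\mathcal G(M)$ iff $\mathbf v - \mathbf w \equiv \pm \mathbf e_i \pmod M$ for some $i$, and in that case $P\mathbf v - P\mathbf w \equiv \pm P\mathbf e_i \pmod{PM}$. The key observation is that, by the very definition of a signed permutation matrix, each column of $P$ is of the form $\pm \mathbf e_{\tau(i)}$ for some permutation $\tau$ of $\{1,\dotsc,n\}$. Hence $P\mathbf e_i \in \pm \mathcal B_n$, and more precisely the assignment $i \mapsto P\mathbf e_i$ induces a bijection of $\pm \mathcal B_n$ onto itself (respecting the $\pm$). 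Consequently $P\mathbf v - P\mathbf w$ is congruent to some $\pm \mathbf e_j$ modulo $PM$, so $\phi(\mathbf v)$ and $\phi(\mathbf w)$ are adjacent in $\mathcal G(PM)$. Applying the same reasoning with $P^{-1}$ (which is again a signed permutation matrix) to $\phi^{-1}$ gives the reverse implication, so $\phi$ is a graph isomorphism.

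I do not foresee any real obstacle: the entire argument reduces to the set-theoretic identity $P(\pm \mathcal B_n) = \pm \mathcal B_n$ together with the compatibility of left multiplication with the lattice quotient. The only subtlety worth emphasising is bookkeeping, namely that the modulus changes from $M$ on the source side to $PM$ on the target side; this is precisely what makes the obvious map $\mathbf v \mapsto P\mathbf v$ an isomorphism of Cayley graphs rather than a mere homomorphism, and it explains why left equivalence (as opposed to the right equivalence of Definition~\ref{def:rightequivalent}) is restricted here to the very special class of signed permutation matrices.
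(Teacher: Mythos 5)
Your proof is correct: left multiplication by $P$ is a $\Z$-module automorphism of $\Z^n$ carrying $M\Z^n$ onto $PM\Z^n$ and permuting the generating set $\pm\mathcal{B}_n$ (with signs), so it descends to an isomorphism $\mathcal G(M)\cong\mathcal G(PM)$. The paper states this result with a citation to \cite{Fiol} and gives no proof of its own, and your argument is exactly the standard one that reference supplies.
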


In this paper we will find matrices such that $\mathcal{G}(M)$ is edge-transitive for dimensions 2 and 3. In the first case, the characterization will be complete, that 
is we will find all $\mathcal{G} (M)$ being symmetric. In the case of dimension 3, we will only consider those being edge-transitive by means of linear automorphism, as explained later.

%

With this aim, in Section \ref{sec:automorfismos} we will consider some properties of
isomorphisms between Cayley graphs over finite Abelian groups, their automorphisms and
the implications of containing cycles of length 4. In Section \ref{sec:simetricos}, we give some general results for the characterization of $\mathcal{G} (M)$ graphs of any dimension symmetric by means of linear automorphisms.
In Section \ref{sec:dimension2} we give the full characterization of symmetric $\mathcal{G} (M)$ graphs of dimension 2 (degree 4).
In Section \ref{sec:dimension3} we give the full characterization of symmetric by linear automorphisms $\mathcal{G} (M)$ graphs of dimension 3 (degree 6).

\section{Linear Automorphisms of Cayley Graphs $\mathcal{G}(M)$ and 4-cycles}\label{sec:automorfismos}

Given a graph $G = (V, E)$, $Aut(G)$ denotes its automorphisms group. $G$ is said to be \textsl{vertex-transitive} if, for
any pair of vertices $v_1, v_2 \in V$ there exists $f \in Aut(G)$ such that $f(v_1) = v_2$. Similarly, $G$ is said to be
\textsl{edge-transitive} if for any pair of edges $e_1 = (v_1, v_2) , e_2 \in E$ there exists $f \in Aut(G)$ such that $f(e_1)= (f(v_1), f(v_2)) =
e_2$. Then, if $G$ is both vertex and edge transitive, then it is called \textsl{symmetric}. The subgroup of $Aut(G)$ of elements which fix some element $x\in V$ is denoted as $Aut(G,x)$ (also known as \textsl{stabilizer}).\par

All Cayley graphs graphs are vertex-transitive \cite{Akers}. The linear automorphisms of a Cayley graph $\mathcal G(M)$ form a group $LAut(\mathcal{G}(M))$.
This group usually coincides with the full automorphism group $Aut(\mathcal{G}(M))$, except in a few cases that we consider separately.
We also consider the group of linear automorphisms which fixes $\mathbf 0$, $LAut(\mathcal{G}(M),\mathbf 0)$.

\begin{definition}
$\mathcal{G}(M)$ is said \textsl{linearly edge-transitive} if for every $i$
there exists $f\in LAut(\mathcal{G}(M),\mathbf 0)$ such that $f(\mathbf e_1)=\pm \mathbf e_i$.
\end{definition}

Clearly, a linearly edge-transitive Cayley graph $\mathcal{G}(M)$ is symmetric. Therefore, in this section we study the automorphism group of $\mathcal{G}(M)$ graphs. A basic question is determining when there are nonlinear automorphisms; which is very related to the problem of determining \textsl{\'{A}d\'{a}m isomorphy} \cite{Adam,Delorme}. A pair of graphs $\mathcal{G}(M_1)$ and $\mathcal{G}(M_2)$ are \textsl{\'{A}d\'{a}m isomorphic} if there exists an isomorphism between their groups of vertices such that it sends the set of generators of one graph into the generators of the other graph. It is clear that any \'{A}d\'{a}m isomorphic
graphs are isomorphic, but the opposite it is not always true.\par

In \cite{Delorme} it was proved that any pair of isomorphic Cayley multigraphs of degree four are \'{A}d\'{a}m isomorphic unless the pair is (up to \'{A}d\'{a}m isomorphy)
$\begin{pmatrix}2k+1&2\\1&2\end{pmatrix},\begin{pmatrix}2k&2\\0&2\end{pmatrix}$ for some integer $k$. Using the nonlinear isomorphism between them one can build a nonlinear automorphism in each; hence they will appear in our study of the nonlinear automorphisms of dimension 2. However the reverse is not true, as there are a few graphs with nonlinear automorphisms which do not have a pairing non-\'{A}d\'{a}m isomorphic graph.

\begin{definition} The \textsl{neighborhood} of a vertex $v$
in the graph $G = (V,E)$ is defined as $N(v)=\{w\mid (v,w)\in E\}$. Then, the
\textsl{common neighborhood} of a list of vertices $v_1,\dotsc,v_n\in V$
as $N(v_1,\dots,v_n)=\bigcap_{i=1}^n N(v_i)$.
\end{definition}

\begin{theorem}\label{theo:neighborhoodpreserved} The neighborhood is preserved in graph isomorphisms.
That is, if $f$ is a graph isomorphism, then
$$N(f(v_1),\dots,f(v_n))=\{f(w)\mid w\in N(v_1,\dots,v_n)\}.$$
\end{theorem}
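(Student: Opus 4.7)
The plan is to unfold the definitions of neighborhood and common neighborhood, then invoke the two defining properties of a graph isomorphism, namely that $f$ is a bijection on vertex sets and that $(v,w)$ is an edge in $G$ if and only if $(f(v),f(w))$ is an edge in the target graph.

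First I would handle the single-vertex case, establishing that $N(f(v))=\{f(w)\mid w\in N(v)\}$. For the inclusion $\supseteq$, if $w\in N(v)$ then $(v,w)$ is an edge, so $(f(v),f(w))$ is an edge, giving $f(w)\in N(f(v))$. For the reverse, any $u'\in N(f(v))$ is of the form $u'=f(w)$ by surjectivity of $f$, and then $(f(v),f(w))$ being an edge forces $(v,w)$ to be an edge, so $w\in N(v)$.

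Then I would reduce the statement of the theorem to the single-vertex case by commuting $f$ with intersections:
$$N(f(v_1),\dotsc,f(v_n)) = \bigcap_{i=1}^n N(f(v_i)) = \bigcap_{i=1}^n f(N(v_i)) = f\!\left(\bigcap_{i=1}^n N(v_i)\right) = \{f(w)\mid w\in N(v_1,\dotsc,v_n)\}.$$
The crucial step here is the third equality, which holds because $f$ is injective: an injection commutes with arbitrary intersections of subsets of its domain.

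There is no real obstacle in this proof, as the result is essentially a restatement of what a graph isomorphism is. The only point worth underlining is that both halves of the argument genuinely use bijectivity: surjectivity is needed to express every element of $N(f(v_i))$ as an $f$-image, and injectivity is needed so that $f$ commutes with the intersection.
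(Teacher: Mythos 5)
Your proposal is correct and uses essentially the same argument as the paper: both reduce the claim to the biconditional edge-preservation property of an isomorphism together with bijectivity. The paper simply runs the chain of equivalences directly on the $n$-fold common neighborhood (the universal quantifier over $i$ playing the role of your intersection-commutation step), whereas you factor it through the single-vertex case; the content is identical.
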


\begin{proof} Let $f$ be a graph isomorphism from $G = (V,E)$ into $G' =
(V',E')$.  We have that $f(w)\in N(f(v_1),\dots,f(v_n))$ if only if $\forall
i,f(w)\in N(f(v_i))$, that is $\forall i,(f(w),f(v_i))\in E'$. Since $f$ is an
isomorphism we have that this is equivalent to $\forall i,(w,v_i)\in E$ so
$w\in N(v_1,\dots,v_n)$.
\end{proof}

Next, we analyze which isomorphisms between Cayley graphs over finite Abelian groups are linear mappings. This is related to the following concept.

\renewcommand*{\thefootnote}{\ddag}
\begin{definition} We say that $\mathbf a, \mathbf b, \mathbf c, \mathbf d\in \pm\mathcal B_n$ form a \emph{4-cycle} in
$\mathcal{G}(M)$ if  $\mathbf 0\equiv\mathbf a+\mathbf b+\mathbf c+\mathbf d\pmod M$\footnote{each of $\{(\mathbf v,\mathbf v+\mathbf a,\mathbf v+\mathbf a+\mathbf b,\mathbf v+\mathbf a+\mathbf b+\mathbf c,\mathbf v+\mathbf a+\mathbf b+\mathbf c+\mathbf d)\mid \mathbf v\in \mathbb Z^n/M\mathbb Z^n)\}$ is a cycle of length 4.}.
If we have $\mathbf a\in\{-\mathbf b,-\mathbf c,-\mathbf d\}$ then we call the cycle \emph{trivial}.
Then, we say that $\mathcal{G}(M)$ has not nontrivial 4-cycles if all its 4-cycles are trivial.
\end{definition}

\begin{theorem}\label{theo:neighborhoodadjacency} If $\mathcal{G}(M)$ is has not
nontrivial 4-cycles, then for all $\mathbf a,\mathbf b\in \mathcal \pm \mathcal B_n$ with $\mathbf a\neq \mathbf b$
	$$N(\mathbf a,\mathbf b)=\{\mathbf 0,\mathbf a+\mathbf b\}.$$
\end{theorem}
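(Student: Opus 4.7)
The plan is to unwind the definition of neighborhood in $\mathcal{G}(M)$ and translate the statement $\mathbf{w}\in N(\mathbf{a},\mathbf{b})$ into an equation modulo $M$ that is exactly the closing relation of a 4-cycle. The no-nontrivial-4-cycles hypothesis will then pin down $\mathbf{w}$.

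First I would verify the easy inclusion $\{\mathbf 0,\mathbf a+\mathbf b\}\subseteq N(\mathbf a,\mathbf b)$: the vertex $\mathbf 0$ is adjacent to $\mathbf a$ and to $\mathbf b$ since $\mathbf a,\mathbf b\in\pm\mathcal B_n$, and $\mathbf a+\mathbf b$ is adjacent to $\mathbf a$ via the step $\mathbf b$ and to $\mathbf b$ via the step $\mathbf a$. (Here I am using $\mathbf a\neq \mathbf b$ only to keep these two vertices honest candidates; I do not yet need them to be distinct from each other.)

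For the reverse inclusion, I would take $\mathbf w\in N(\mathbf a,\mathbf b)$ and expand the definition: there exist $\mathbf c,\mathbf d\in\pm\mathcal B_n$ with $\mathbf w\equiv\mathbf a+\mathbf c\equiv\mathbf b+\mathbf d\pmod M$. Subtracting, I get
\[
\mathbf a+\mathbf c+(-\mathbf b)+(-\mathbf d)\equiv\mathbf 0\pmod M,
\]
and since $\pm\mathcal B_n$ is closed under negation, the quadruple $(\mathbf a,\mathbf c,-\mathbf b,-\mathbf d)$ is exactly a 4-cycle in the sense of the preceding definition. By hypothesis it is trivial, so $\mathbf a$ coincides with the negative of one of the other three entries: either $\mathbf a=-\mathbf c$, or $\mathbf a=\mathbf b$, or $\mathbf a=\mathbf d$.

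Finally I would dispose of these three cases. The middle one is excluded by the assumption $\mathbf a\neq \mathbf b$. In the first case $\mathbf c=-\mathbf a$ gives $\mathbf w\equiv\mathbf a+\mathbf c\equiv\mathbf 0$, and in the third case $\mathbf d=\mathbf a$ gives $\mathbf w\equiv\mathbf b+\mathbf d\equiv\mathbf a+\mathbf b$, yielding $N(\mathbf a,\mathbf b)\subseteq\{\mathbf 0,\mathbf a+\mathbf b\}$. There is no real obstacle here; the only thing that deserves care is recognising that the triviality condition as stated, which singles out $\mathbf a$, already covers every symmetric rewriting (since if $\mathbf a$ is not the negative of any other entry, neither is any other entry the negative of $\mathbf a$), so it really does force one of the three equalities above and the argument closes.
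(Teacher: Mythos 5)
Your proof is correct and follows essentially the same route as the paper: translate $\mathbf w\in N(\mathbf a,\mathbf b)$ into the 4-cycle relation $\mathbf a+\mathbf c-\mathbf b-\mathbf d\equiv\mathbf 0\pmod M$, invoke triviality, and resolve the three cases $\mathbf a=-\mathbf c$, $\mathbf a=\mathbf b$, $\mathbf a=\mathbf d$ exactly as the paper does. The only difference is that you also verify the easy inclusion $\{\mathbf 0,\mathbf a+\mathbf b\}\subseteq N(\mathbf a,\mathbf b)$, which the paper's proof silently omits despite the statement asserting equality of sets.
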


\begin{proof}
If $\mathbf v\in N(\mathbf a,\mathbf b)$ then $\exists \mathbf x,\mathbf y\in \pm\mathcal B_n$ such that $\mathbf v=\mathbf a+\mathbf x=\mathbf b+\mathbf y$.
Since we have $\mathbf a-\mathbf b+\mathbf x-\mathbf y=\mathbf 0$ and $\mathcal{G}(M)$ has not nontrivial 4-cycles, it must be fulfilled one of the following expressions:
\begin{itemize}
    \item $\mathbf a=\mathbf b$ contradicting the hypothesis,
    \item $\mathbf a=-\mathbf x$ and thus $\mathbf v=\mathbf a-\mathbf a=\mathbf 0$,
    \item $\mathbf a=\mathbf y$ and thus $\mathbf v=\mathbf b+\mathbf y=\mathbf a+\mathbf b$.
\end{itemize}
\end{proof}

\begin{lemma}\label{lem:translatedautomorphism} If $f$ is an automorphism of $\mathcal{G}(M)$, then
for any $\mathbf t\in \Z^n/M\Z^n$, $f_{\mathbf t}:\mathbf x\mapsto f(\mathbf t+\mathbf x)-f(\mathbf t)$ is an automorphism of $\mathcal{G}(M)$ with $f_{\mathbf t}(\mathbf 0)=\mathbf 0$.
\end{lemma}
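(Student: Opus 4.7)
The plan is to exhibit $f_{\mathbf t}$ as a composition of automorphisms. For any $\mathbf a\in \Z^n/M\Z^n$, consider the translation map $\tau_{\mathbf a}:\mathbf x\mapsto \mathbf x+\mathbf a$. I would first verify that $\tau_{\mathbf a}$ is an automorphism of $\mathcal G(M)$: it is a bijection of $\Z^n/M\Z^n$ with inverse $\tau_{-\mathbf a}$, and adjacency is preserved because $(\mathbf x,\mathbf y)$ is an edge iff $\mathbf x-\mathbf y\equiv \pm\mathbf e_i\pmod M$, which is unchanged by shifting both $\mathbf x$ and $\mathbf y$ by the same vector $\mathbf a$. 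This is the abelian Cayley graph version of the standard vertex-transitivity argument.

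Next, I would observe that
\[
f_{\mathbf t} \;=\; \tau_{-f(\mathbf t)}\,\circ\, f\,\circ\,\tau_{\mathbf t},
\]
since $(\tau_{-f(\mathbf t)}\circ f\circ\tau_{\mathbf t})(\mathbf x)=f(\mathbf t+\mathbf x)-f(\mathbf t)$ by direct substitution. Because the composition of graph automorphisms is again a graph automorphism, and each of the three factors is an automorphism of $\mathcal G(M)$ (the outer two by the previous paragraph, the middle one by hypothesis), $f_{\mathbf t}$ is an automorphism of $\mathcal G(M)$.

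Finally, evaluating at $\mathbf 0$ gives $f_{\mathbf t}(\mathbf 0)=f(\mathbf t+\mathbf 0)-f(\mathbf t)=\mathbf 0$, establishing the fixed-point condition. There is no real obstacle here; the only thing to be careful about is that translations genuinely preserve the edge relation of $\mathcal G(M)$, which follows immediately from the definition of adjacency as a condition on the difference $\mathbf x-\mathbf y$ modulo $M$.
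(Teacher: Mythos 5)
Your proof is correct and is essentially the paper's argument: the paper verifies directly that $\mathbf x\sim\mathbf y$ implies $\mathbf t+\mathbf x\sim\mathbf t+\mathbf y$, hence $f(\mathbf t+\mathbf x)\sim f(\mathbf t+\mathbf y)$, hence $f_{\mathbf t}(\mathbf x)\sim f_{\mathbf t}(\mathbf y)$, which is exactly your composition $\tau_{-f(\mathbf t)}\circ f\circ\tau_{\mathbf t}$ traced step by step. Your packaging of the translations as explicit automorphisms is a harmless cosmetic difference.
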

\begin{proof}We have $f_{\mathbf t}(\mathbf 0)=f(\mathbf t+\mathbf 0)-f(\mathbf t)=\mathbf 0$, thus $f_{\mathbf t}$ fixes $\mathbf 0$.
Now if $\mathbf x\in \Z^n/M\Z^n$ is adjacent to $\mathbf y\in \Z^n/M\Z^n$ then $\mathbf t+\mathbf x$ is adjacent to $\mathbf t+\mathbf y$ and then as $f$ is an automorphism we have that $f(\mathbf t+\mathbf x)$ is adjacent to $f(\mathbf t+\mathbf y)$. Hence $f_{\mathbf t}(\mathbf x)$ is adjacent to $f_{\mathbf t}(\mathbf y)$.
\end{proof}

\begin{lemma}\label{lem:aristacommfull}
Let $\mathcal{G}(M)$ be such that it has not nontrivial 4-cycles.
Then for any $f\in Aut(\mathcal{G}(M),\mathbf 0)$
we have that $f(\mathbf a+\mathbf b)=f(\mathbf a)+f(\mathbf b)$ for any $\mathbf a,\mathbf b\in \pm \mathcal B_n$.
\end{lemma}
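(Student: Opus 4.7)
The plan is to lean on Theorem~\ref{theo:neighborhoodadjacency}, which identifies $N(\mathbf a,\mathbf b)=\{\mathbf 0,\mathbf a+\mathbf b\}$ whenever $\mathbf a\neq\mathbf b$ both lie in $\pm\mathcal B_n$, together with Theorem~\ref{theo:neighborhoodpreserved}, which transports common neighborhoods along any graph isomorphism. Because $f$ fixes $\mathbf 0$, it restricts to a permutation of $N(\mathbf 0)=\pm\mathcal B_n$, so $f(\mathbf a),f(\mathbf b)$ lie again in $\pm\mathcal B_n$ and are distinct as soon as $\mathbf a,\mathbf b$ are. I would split the verification into the three cases $\mathbf a\neq\pm\mathbf b$, $\mathbf a=-\mathbf b$, and $\mathbf a=\mathbf b$.

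In the generic case $\mathbf a\neq\pm\mathbf b$, Theorem~\ref{theo:neighborhoodadjacency} applied to $(\mathbf a,\mathbf b)$ and then Theorem~\ref{theo:neighborhoodpreserved} give $N(f(\mathbf a),f(\mathbf b))=f(N(\mathbf a,\mathbf b))=\{\mathbf 0,f(\mathbf a+\mathbf b)\}$. Applying Theorem~\ref{theo:neighborhoodadjacency} directly to the distinct pair $f(\mathbf a),f(\mathbf b)\in\pm\mathcal B_n$ instead describes the same set as $\{\mathbf 0,f(\mathbf a)+f(\mathbf b)\}$, so matching these two descriptions forces $f(\mathbf a+\mathbf b)=f(\mathbf a)+f(\mathbf b)$. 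The antipodal case $\mathbf a=-\mathbf b$ reduces to proving $f(-\mathbf a)=-f(\mathbf a)$, and the same neighborhood comparison, starting now from $N(\mathbf a,-\mathbf a)=\{\mathbf 0\}$, yields $\{\mathbf 0\}=N(f(\mathbf a),f(-\mathbf a))=\{\mathbf 0,f(\mathbf a)+f(-\mathbf a)\}$, hence $f(-\mathbf a)=-f(\mathbf a)$.

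The diagonal case $\mathbf a=\mathbf b$ is not directly covered by Theorem~\ref{theo:neighborhoodadjacency}, so I would argue by pigeonhole on $N(\mathbf a)=\mathbf a+\pm\mathcal B_n$. For every $\mathbf x\in\pm\mathcal B_n$ with $\mathbf x\neq\mathbf a$, the previous two cases already give $f(\mathbf a+\mathbf x)=f(\mathbf a)+f(\mathbf x)$, and since $f$ restricts to a bijection of $\pm\mathcal B_n$ these offsets $f(\mathbf x)$ sweep out exactly $\pm\mathcal B_n\setminus\{f(\mathbf a)\}$. Since $f$ must biject $N(\mathbf a)$ with $N(f(\mathbf a))=f(\mathbf a)+\pm\mathcal B_n$, the one leftover image must be $f(2\mathbf a)=f(\mathbf a)+f(\mathbf a)=2f(\mathbf a)$, closing the last case.

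The first two cases are essentially mechanical applications of the two cited theorems. The real care is needed in the diagonal case, where Theorem~\ref{theo:neighborhoodadjacency} does not apply and one has to handle the potential degeneracies, notably when $2\mathbf a\equiv\mathbf 0\pmod M$ or when some of the $\pm\mathbf e_i$ coincide modulo $M$; in those situations $\mathcal G(M)$ is more naturally viewed as a multigraph and one invokes that an automorphism preserves edge multiplicities so that the counting in $N(\mathbf a)$ still closes. This bookkeeping is the only non-routine point of the plan.
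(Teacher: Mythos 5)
Your proof is correct, and in the off-diagonal cases ($\mathbf a\neq\mathbf b$, including $\mathbf b=-\mathbf a$) it coincides with the paper's argument: compare the two descriptions of $N(f(\mathbf a),f(\mathbf b))$ obtained from Theorem \ref{theo:neighborhoodadjacency} and Theorem \ref{theo:neighborhoodpreserved}. Where you genuinely diverge is the diagonal case $f(2\mathbf a)=2f(\mathbf a)$. The paper does no counting there: it invokes Lemma \ref{lem:translatedautomorphism} to form the translated automorphism $f'(\mathbf v)=f(\mathbf a+\mathbf v)-f(\mathbf a)\in Aut(\mathcal{G}(M),\mathbf 0)$, applies the already-established identity $f'(-\mathbf a)=-f'(\mathbf a)$ to it, and rearranges to get $f(2\mathbf a)=2f(\mathbf a)$ in one line. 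Your pigeonhole on $N(\mathbf a)$ also works, but it is the more delicate route: it requires $N(\mathbf a)$ to consist of $2n$ pairwise distinct vertices, i.e.\ that no two elements of $\pm\mathcal B_n$ coincide modulo $M$ and that $2\mathbf a\not\equiv\mathbf 0\pmod M$. You flag this and propose passing to the multigraph and invoking preservation of edge multiplicities, which is both unnecessary and slightly off (an automorphism of the simple graph is not a priori an automorphism of the associated multigraph). The clean fix is that these degeneracies are already excluded by the hypothesis: $2\mathbf e_i\equiv\mathbf 0$ yields the nontrivial $4$-cycle $\mathbf e_i+\mathbf e_i+\mathbf e_j+(-\mathbf e_j)\equiv\mathbf 0$ for any $j\neq i$, and $\mathbf e_i\equiv\pm\mathbf e_j$ with $i\neq j$ similarly produces one. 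With that observation your counting closes; the paper's translation trick simply sidesteps the whole issue and reuses machinery that is needed again in Lemma \ref{lem:commlinear}.
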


\begin{proof} Let $\mathbf a,\mathbf b\in \pm\mathcal B_n$. First we prove the lemma for $\mathbf a\neq \mathbf b$.
From Theorem \ref{theo:neighborhoodadjacency} we get that
$N(\mathbf a,\mathbf b)=\{\mathbf 0,\mathbf a+\mathbf b\}$, hence by Theorem \ref{theo:neighborhoodpreserved} $N(f(\mathbf a),f(\mathbf b))=\{f(\mathbf 0),f(\mathbf a+\mathbf b)\}=\{\mathbf 0,f(\mathbf a)+f(\mathbf b)\}$.
As $f(\mathbf 0)=\mathbf 0$ we have that $f(\mathbf a+\mathbf b)=f(\mathbf a)+f(\mathbf b)$.\par

Now note that since for any $\mathbf a\in \pm\mathcal B_n$, $\mathbf a\neq -\mathbf a$ we have that for any $f\in Aut(\mathcal{G}(M),\mathbf 0)$, $f(-\mathbf a)=-f(\mathbf a)$.\par

It remains to prove that $f(2\mathbf a)=2f(\mathbf a)$. Consider the automorphism $f'$ defined by $f'(\mathbf v)=f(\mathbf a+\mathbf v)-f(\mathbf a)$ (it is an automorphism by Lemma \ref{lem:translatedautomorphism}).
We have $f'(-\mathbf a)=-f'(\mathbf a)$, hence $f(\mathbf a-\mathbf a)-f(\mathbf a)=-(f(\mathbf a+\mathbf a)-f(\mathbf a))$. Rearranging terms we obtain the desired $f(2\mathbf a)=2f(\mathbf a)$.
\end{proof}

\begin{lemma}\label{lem:commlinear} If $\forall \mathbf a,\mathbf b\in\pm\mathcal B_n,\ f\in Aut(\mathcal{G}(M),\mathbf 0),\ f(\mathbf a+\mathbf b)=f(\mathbf a)+f(\mathbf b)$ then
every $f\in Aut(\mathcal{G}(M),\mathbf 0)$ is a group automorphism of $\Z^n/M\Z^n$.
\end{lemma}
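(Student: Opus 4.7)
The plan is to upgrade the two‑term additivity on $\pm\mathcal B_n$ supplied by the hypothesis to full additivity on $\Z^n/M\Z^n$. The key lever is Lemma \ref{lem:translatedautomorphism}: for any $f\in Aut(\mathcal{G}(M),\mathbf 0)$ and any $\mathbf t$, the translate $f_{\mathbf t}$ also lies in $Aut(\mathcal{G}(M),\mathbf 0)$, and therefore the hypothesis applies to it too. So I plan to argue by strong induction on $k\ge 0$ that the statement
\[P(k):\ \text{for every }f\in Aut(\mathcal{G}(M),\mathbf 0)\text{ and every }\mathbf a_1,\dotsc,\mathbf a_k\in \pm\mathcal B_n,\ f(\mathbf a_1+\dotsb+\mathbf a_k)=f(\mathbf a_1)+\dotsb+f(\mathbf a_k)\]
holds. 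The cases $k\le 2$ are immediate (use $f(\mathbf 0)=\mathbf 0$ and the hypothesis).

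For the inductive step from $\le k$ to $k+1$, set $\mathbf t=\mathbf a_1$ and consider $f_{\mathbf t}\in Aut(\mathcal{G}(M),\mathbf 0)$. The inductive hypothesis $P(k)$, applied to $f_{\mathbf t}$, yields
\[f_{\mathbf t}(\mathbf a_2+\dotsb+\mathbf a_{k+1})=f_{\mathbf t}(\mathbf a_2)+\dotsb+f_{\mathbf t}(\mathbf a_{k+1}).\]
Unwinding the definition on the left gives $f(\mathbf a_1+\dotsb+\mathbf a_{k+1})-f(\mathbf a_1)$, while on each term of the right the two‑term hypothesis $f(\mathbf a_1+\mathbf a_i)=f(\mathbf a_1)+f(\mathbf a_i)$ (applied to $f$) simplifies $f_{\mathbf t}(\mathbf a_i)=f(\mathbf a_1+\mathbf a_i)-f(\mathbf a_1)$ to $f(\mathbf a_i)$. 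Rearranging gives $P(k+1)$.

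Once $P(k)$ is known for all $k$, I finish as follows. Because $\mathcal{G}(M)$ is a Cayley graph on $\Z^n/M\Z^n$ with generating set $\pm\mathcal B_n$, every element of the group is expressible as a sum of elements of $\pm\mathcal B_n$. Given arbitrary $\mathbf v,\mathbf w\in \Z^n/M\Z^n$, pick such expressions $\mathbf v=\mathbf a_1+\dotsb+\mathbf a_p$ and $\mathbf w=\mathbf b_1+\dotsb+\mathbf b_q$ and apply $P(p+q)$, $P(p)$, $P(q)$ to $f$ to get $f(\mathbf v+\mathbf w)=f(\mathbf v)+f(\mathbf w)$. Hence $f$ is a group homomorphism, and being a bijection (it is a graph automorphism) it is a group automorphism of $\Z^n/M\Z^n$.

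The only delicate point is the bootstrap that lets the induction work at all: the lemma's premise has to hold not just for the fixed $f$ we started with, but for all elements of $Aut(\mathcal{G}(M),\mathbf 0)$, and in particular for each translate $f_{\mathbf t}$ we produce along the way. That is built into the statement and is the reason the induction on word length closes; without the translated‑automorphism trick one would be stuck trying to absorb an extra basis vector past an already‑long sum.
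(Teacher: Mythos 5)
Your proof is correct and follows essentially the same route as the paper: both arguments bootstrap the two-term hypothesis via the translated automorphisms of Lemma \ref{lem:translatedautomorphism} (which is exactly why the hypothesis must be quantified over all of $Aut(\mathcal{G}(M),\mathbf 0)$) and then induct on the length of a word in the generators. The only cosmetic difference is that you peel off one generator per step by translating by $\mathbf a_1$ and invoking the inductive hypothesis for $f_{\mathbf a_1}$, whereas the paper first derives the identity $f(\mathbf t+\mathbf a+\mathbf b)=f(\mathbf t+\mathbf a)+f(\mathbf t+\mathbf b)-f(\mathbf t)$ and inducts on $\sum_i n_i$ for nonnegative coefficients.
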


\begin{proof}
First we prove that for all $f \in Aut\bigl(\mathcal{G}(M),\mathbf 0\bigr)$ we have that
$$\forall \mathbf t\in \mathcal{G}(M),\ f(\mathbf t+\mathbf a+\mathbf b)=f(\mathbf t+\mathbf a)+f(\mathbf t+\mathbf b)-f(\mathbf t).$$
Let $\mathbf t \in \mathcal{G}(M)$. We define $f_{\mathbf t}(\mathbf v)=f(\mathbf t+\mathbf v)-f(\mathbf t)$, by Lemma \ref{lem:translatedautomorphism} $f_{\mathbf t}\in Aut(\mathcal{G}(M),\mathbf 0)$.
By hypothesis, we have $\forall \mathbf t\in \mathcal{G}(M),\ f_{\mathbf t}(\mathbf a+\mathbf b)=f_{\mathbf t}(\mathbf a)+f_{\mathbf t}(\mathbf b)$, which implies
$\forall \mathbf t\in \mathcal{G}(M),\ f(\mathbf t+\mathbf a+\mathbf b)-f(\mathbf t)=f(\mathbf t+\mathbf a)-f(\mathbf t)+f(\mathbf t+\mathbf b)-f(\mathbf t)$.\par

We need to prove $\forall n_i\in\N,\ f(\sum_i n_i\mathbf e_i)=\sum_in_if(\mathbf e_i)$.
We proceed by induction in $N=\sum_i n_i$; for $N=0,1$ it is immediate.
Now let $\mathbf v=\sum_i n_i\mathbf e_i$ and $\sum_i n_i=N+1$. Let $u,v$ be any positive integers such that $n_u+n_v\geq 2$.
Now, because of the first claim,
$f(\mathbf v)=
	f((\mathbf v-\mathbf e_u-\mathbf e_v)+\mathbf e_u+\mathbf e_v)=
	f((\mathbf v-\mathbf e_u-\mathbf e_v)+\mathbf e_u)+f((\mathbf v-\mathbf e_u-\mathbf e_v)+\mathbf e_v)-f(\mathbf v-\mathbf e_u-\mathbf e_v)$.
	Applying the induction hypothesis we have that:
$f(\mathbf v)=
	\bigl(f(\mathbf v-\mathbf e_u-\mathbf e_v)+f(\mathbf e_u)\bigr)
	+\bigl(f(\mathbf v-\mathbf e_u-\mathbf e_v)+f(\mathbf e_v)\bigr)
	-f(\mathbf v-\mathbf e_u-\mathbf e_v)=
	f(\mathbf v-\mathbf e_u-\mathbf e_v)+f(\mathbf e_u)+f(\mathbf e_v)
	$.
Then as $f(\mathbf v-\mathbf e_u-\mathbf e_v)=
	f(\sum_i n_i\mathbf e_i -\mathbf e_u-\mathbf e_v)=
	\sum_i n_if(\mathbf e_i) -f(\mathbf e_u)-f(\mathbf e_v)$
	we have that $f(\mathbf v)=\sum_i n_if(\mathbf e_i)$.
\end{proof}

\begin{theorem}\label{theo:linear} If the graph $\mathcal{G}(M)$ has not nontrivial 4-cycles then any graph automorphism with $f(\mathbf 0)=\mathbf 0$ is a group automorphism of $\Z^n/M\Z^n$.
\end{theorem}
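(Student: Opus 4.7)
The plan is to observe that this theorem is essentially the composition of the two preceding lemmas, so the proof is a short chain of implications rather than new work. The hypothesis of no nontrivial 4-cycles is exactly what powers Lemma \ref{lem:aristacommfull}, and the conclusion of that lemma is exactly the hypothesis of Lemma \ref{lem:commlinear}.

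Concretely, I would let $f \in Aut(\mathcal{G}(M),\mathbf 0)$ be arbitrary. Applying Lemma \ref{lem:aristacommfull} to $f$, I obtain the identity $f(\mathbf a + \mathbf b) = f(\mathbf a) + f(\mathbf b)$ for all $\mathbf a,\mathbf b \in \pm\mathcal B_n$. Since this holds for every such $f$, the universal hypothesis of Lemma \ref{lem:commlinear} is satisfied, and its conclusion gives exactly that every $f \in Aut(\mathcal{G}(M),\mathbf 0)$ is a group automorphism of $\Z^n/M\Z^n$.

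The only subtle point worth flagging explicitly is that being a group automorphism requires $f$ to be a bijective group homomorphism. Bijectivity is free because $f$ is already a graph automorphism. For the homomorphism property, Lemma \ref{lem:commlinear} establishes additivity on nonnegative integer combinations $\sum_i n_i \mathbf e_i$; combined with $f(-\mathbf a) = -f(\mathbf a)$, which is part of the argument in Lemma \ref{lem:aristacommfull}, this covers arbitrary integer combinations and hence every element of $\Z^n/M\Z^n$.

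There is essentially no obstacle here: all the real content, in particular the use of common neighborhoods and the trick of translating an automorphism via $f_{\mathbf t}$, has been front-loaded into the two lemmas. The theorem just packages their joint conclusion under the single hypothesis ``no nontrivial 4-cycles''.
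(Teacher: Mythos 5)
Your proposal matches the paper's proof exactly: the paper also just chains Lemma \ref{lem:aristacommfull} (whose hypothesis is the absence of nontrivial 4-cycles) into Lemma \ref{lem:commlinear} to conclude. The extra remark about negative coefficients is harmless but not needed, since every class in the finite group $\Z^n/M\Z^n$ has a representative with nonnegative entries, which is what Lemma \ref{lem:commlinear} handles.
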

\begin{proof}
If there are not nontrivial 4-cycles then by Lemma \ref{lem:aristacommfull} we have
for any $f\in Aut(\mathcal{G}(M),\mathbf 0)$
that $f(\mathbf a+\mathbf b)=f(\mathbf a)+f(\mathbf b)$ for any $\mathbf a,\mathbf b\in \pm\mathcal B_n$.
Now we have linearity by Lemma \ref{lem:commlinear}.
\end{proof}

\section{Edge-transitivity of Cayley graphs $\mathcal{G}(M)$ by Linear Automorphisms}\label{sec:simetricos}


In this section we will consider those graphs $\mathcal{G}(M)$ such that any of its automorphisms is a linear mapping.

\begin{theorem}\label{theo:linealtopermutation}
For any $f\in LAut(\mathcal{G}(M),\mathbf 0)$ there exists a signed permutation matrix $P$ such that $\forall \mathbf a\in \mathbb{Z}^n / M \mathbb{Z}^n ,\ f(\mathbf a)=P\mathbf a$.
\end{theorem}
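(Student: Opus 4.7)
The plan is to read off $P$ directly from the action of $f$ on the standard generators $\mathbf{e}_1,\ldots,\mathbf{e}_n$ of the quotient group $\mathbb{Z}^n/M\mathbb{Z}^n$. Since $f$ is linear, i.e. a group automorphism of the quotient, its action on an arbitrary $\mathbf{a}=\sum_i a_i\mathbf{e}_i$ is determined by the rule $f(\mathbf{a}) \equiv \sum_i a_i\,f(\mathbf{e}_i)\pmod M$. Consequently, if I can exhibit integer representatives of the images $f(\mathbf{e}_i)$ that assemble into a signed permutation matrix $P$, then linearity will automatically propagate $P\mathbf{a}\equiv f(\mathbf{a})\pmod M$ from the generators to every $\mathbf{a}$, which is exactly what the statement requires.

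To obtain the candidate $P$, I would use that $f$ is a graph automorphism fixing $\mathbf{0}$, so it permutes the neighborhood $N(\mathbf{0})$ in $\mathcal{G}(M)$. By construction, every element of $N(\mathbf{0})$ is of the form $\pm\mathbf{e}_j \pmod M$, so for each $i$ there exist a sign $\epsilon_i\in\{\pm 1\}$ and an index $\sigma(i)\in\{1,\ldots,n\}$ with $f(\mathbf{e}_i)\equiv \epsilon_i\mathbf{e}_{\sigma(i)}\pmod M$. Taking $P$ to be the integer matrix whose $i$-th column is $\epsilon_i\mathbf{e}_{\sigma(i)}$ produces the natural candidate; the first paragraph then shows $P\mathbf{a}\equiv f(\mathbf{a})\pmod M$ holds for every $\mathbf{a}\in\mathbb{Z}^n/M\mathbb{Z}^n$.

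The main verification, and the main obstacle I foresee, is that $P$ can genuinely be chosen as a signed permutation matrix, i.e., that the indices $\sigma(i)$ are distinct. If $\sigma(i)=\sigma(j)=k$ for some $i\neq j$, combining $\epsilon_i\mathbf{e}_k\equiv f(\mathbf{e}_i)$ and $\epsilon_j\mathbf{e}_k\equiv f(\mathbf{e}_j)$ gives $f(\epsilon_j\mathbf{e}_i-\epsilon_i\mathbf{e}_j)\equiv\mathbf{0}$; since $f$ is a bijection with $f(\mathbf{0})=\mathbf{0}$, this forces the congruence $\mathbf{e}_i\equiv\pm\mathbf{e}_j\pmod M$, exactly one of the degenerate relations flagged in the introduction in which the Cayley graph has degree less than $2n$. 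In the generic case such congruences do not occur, and injectivity of $f$ already makes $\sigma$ a bijection. In the degenerate case, the redundancy among the generators leaves freedom to pick among several lifts of each $f(\mathbf{e}_i)$; I would finish by matching colliding generators in the source and the target consistently, producing a bijective $\sigma$ with compatible signs so that $P$ is a signed permutation matrix realizing the same action on the quotient as $f$.
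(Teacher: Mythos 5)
Your proof is correct and follows essentially the same route as the paper: read off the columns of $P$ from the images $f(\mathbf e_i)$, which lie in $\pm\mathcal B_n$ because $f$ fixes $\mathbf 0$ and hence permutes $N(\mathbf 0)$, and then propagate to all of $\mathbb Z^n/M\mathbb Z^n$ by linearity. The one place you go beyond the paper is the injectivity of $\sigma$: the paper's proof silently assumes the images $f(\mathbf e_j)$ determine distinct columns, whereas you correctly observe this can only fail when $\mathbf e_i\equiv\pm\mathbf e_j\pmod M$; be aware, though, that your closing claim that colliding generators can always be rematched is not actually provable in full generality (e.g.\ $\mathbb Z/5\mathbb Z$ with generators $\mathbf e_1\equiv\mathbf e_2\equiv 1$, $\mathbf e_3\equiv 2$ and $f(x)=2x$ admits no signed permutation lift), so that degenerate situation must be excluded by hypothesis rather than argued away --- exactly as the paper implicitly does.
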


\begin{proof}
We define $P$ as:
$$P_{i,j}=\left\{
    \begin{array}{rl}
    1&\text{if }f(\mathbf e_j)=\mathbf e_i\\
    -1&\text{if }f(\mathbf e_j)=-\mathbf e_i\\
    0&\mbox{otherwise}
    \end{array}\right.$$
having $f(\mathbf e_i)=P\mathbf e_i$.
	Let $\mathbf a=\sum n_i\mathbf e_i$.
$$f(\mathbf a)=\sum_i n_if(\mathbf e_i)=\sum_i n_iP\mathbf e_i=P\sum_i n_i\mathbf e_i=P\mathbf a$$
\end{proof}

\begin{theorem}\label{theo:PM=MQ}For any $M \in \mathbb{Z}^{n \times n}$ the mapping $f(\mathbf x)=P\mathbf x$ is a linear automorphism of $\mathcal{G}(M)$ if only if there exists $Q \in \mathbb{Z}^{n \times n}$ such that $PM=MQ$.
\end{theorem}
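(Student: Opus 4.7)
My plan is to establish the equivalence by reducing it to the statement that $P$, viewed as an endomorphism of $\mathbb Z^n$, descends to a well-defined, bijective, adjacency-preserving map on $\mathbb Z^n / M \mathbb Z^n$. By Theorem~\ref{theo:linealtopermutation} I may take $P$ to be a signed permutation matrix: every linear automorphism fixing $\mathbf 0$ has this form, and the map $f(\mathbf x) = P\mathbf x$ automatically satisfies $f(\mathbf 0) = \mathbf 0$, so this is the relevant case (and in particular $P$ is unimodular with $P^{-1}$ also integral).

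For the forward direction I will simply use that an automorphism is, in particular, a well-defined map modulo $M$. This forces $P$ to carry the sublattice $M\mathbb Z^n$ into itself, which applied column-by-column to $M$ produces integer vectors $\mathbf q_j$ with $PM\mathbf e_j = M\mathbf q_j$; assembling these columns yields the desired $Q \in \mathbb Z^{n\times n}$ with $PM = MQ$. For the backward direction, assuming $PM=MQ$ with $Q$ integral, I will verify three things in turn: (i) well-definedness on the quotient, which is immediate from $P(M\mathbf k) = M(Q\mathbf k) \in M\mathbb Z^n$ for every $\mathbf k \in \mathbb Z^n$; (ii) bijectivity, where taking determinants in $PM = MQ$ forces $|\det Q| = |\det P| = 1$, so $Q$ is unimodular, and applying the same argument to the identity $P^{-1}M = MQ^{-1}$ shows $P^{-1}$ also descends to the quotient, giving a two-sided inverse to $f$; (iii) adjacency preservation, since $\mathbf x - \mathbf y \equiv \pm \mathbf e_i \pmod M$ implies $P\mathbf x - P\mathbf y \equiv \pm P\mathbf e_i \pmod M$, and $\pm P\mathbf e_i$ is again a signed basis vector because $P$ is a signed permutation matrix.

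There is no genuinely hard step here: the argument is a translation between the lattice-theoretic condition $PM = MQ$ and the quotient-level statement that $f$ is well-defined. The only point deserving a moment of care is bijectivity, which reduces to the determinant identity $\det P \cdot \det M = \det M \cdot \det Q$ forcing $Q$ to be unimodular; once that is observed, everything else is routine.
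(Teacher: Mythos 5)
Your proposal is correct. The forward direction is identical to the paper's: well-definedness of $f$ on the quotient forces $P(M\mathbb Z^n)\subseteq M\mathbb Z^n$, and assembling the columns $PM\mathbf e_j=M\mathbf q_j$ gives $Q$. For the backward direction the paper takes a shortcut: it observes that $f$ is an isomorphism from $\mathcal G(M)$ onto $\mathcal G(PM)=\mathcal G(MQ)$ by Theorem~\ref{theo:isoleft}, and then invokes Theorem~\ref{theo:isoright} to conclude $\mathcal G(MQ)\cong\mathcal G(M)$, whereas you verify well-definedness, bijectivity, and adjacency preservation by hand. The two arguments have the same mathematical content, but yours makes explicit a point the paper leaves tacit: Theorem~\ref{theo:isoright} only applies when $Q$ is a unit matrix, and your determinant computation $\det P\cdot\det M=\det M\cdot\det Q$ (with $M$ non-singular and $P$ a signed permutation) is exactly what guarantees $|\det Q|=1$. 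So your version is slightly more self-contained and fills a small gap; the paper's is shorter because it delegates the bookkeeping to the two isomorphism theorems. One remark on framing: the statement as written quantifies over arbitrary $P$, but both your proof and the paper's implicitly restrict to signed permutation matrices (the backward direction genuinely needs this for adjacency preservation, since otherwise $P\mathbf e_i$ need not be a generator); you flag this via Theorem~\ref{theo:linealtopermutation}, which is the right reading of the theorem's intent.
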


\begin{proof}
We prove first the left to right implication.
	As $f$ must be well-defined, for all $i$, $\mathbf 0=P\mathbf 0\equiv PM\mathbf e_i\pmod M$.
	And then exists $\mathbf q_i$ such that $PM\mathbf e_i=M\mathbf q_i$, gathering all $i$s
	together
	$$PM=[PM\mathbf e_1,\dots,PM\mathbf e_n]=M[\mathbf q_1,\dots,\mathbf q_n]=MQ.$$

For the right to left implication; by Theorem \ref{theo:isoleft} $f$ is an isomorphism from $\mathcal G(M)$ into $\mathcal G(PM)=\mathcal G(MQ)$. Then by Theorem \ref{theo:isoright} $f$ is an automorphism of $\mathcal G(M)$.
\end{proof}

To know if $\mathcal{G}(M)$ is linearly edge-transitive we need to look to the multiplicative group of the signed permutation matrices $P$ such $PM=MQ$.
It is clear that, if a matrix representing a cycle of length $n$
(even if it changes signs) is in the group then by composing it with itself,
we can map $\mathbf e_1$ to every $\mathbf e_i$ making the graph edge-transitive.
In these cases we have that $LAut(\mathcal{G}(M),\mathbf 0)$ is a cyclic group.
The smallest dimension for which we found $LAut(\mathcal{G}(M),\mathbf 0)$ to be noncyclic
is for $n=4$ with the Klein four-group. That situation occurs for example for Lipschitz graphs, which were introduced in in \cite{IEEETITQuat}.
Since we just consider dimensions 2 and 3, this will not suppose any problem.\par

\begin{definition} Two matrices $A,B\in\mathbb{\Z}^{n\times n}$ are \textsl{similar}, denoted
by $A\sim B$, if there exists a unit matrix $U \in\mathbb{\Z}^{n\times n}$ such that $AU=UB$. 
\end{definition}

\begin{lemma}\label{lem:rightsimilar}
Let $PM=MQ$ and $PM'=M'Q'$. Then, $M\cong M'$ if and only if $Q\sim Q'.$
\end{lemma}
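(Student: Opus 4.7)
The lemma should unpack into a direct calculation about conjugation and right multiplication by unit matrices, exploiting that $M$ and $M'$ are non-singular so that left/right cancellation is legal over $\Q$. Throughout, I use that $U$ being a \emph{unit matrix} means $U\in GL_n(\Z)$, so $U^{-1}$ is again an integer matrix.

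\paragraph{Forward direction ($M\cong M'\Rightarrow Q\sim Q'$).} Suppose $M=M'U$ for some unit $U$. Substituting into $PM=MQ$ gives $PM'U=M'UQ$. Right-multiplying by $U^{-1}$ yields $PM'=M'(UQU^{-1})$, and comparing with the hypothesis $PM'=M'Q'$, then cancelling the non-singular $M'$ on the left, I obtain $Q'=UQU^{-1}$. This rearranges to $QU^{-1}=U^{-1}Q'$, so setting $V:=U^{-1}$ (again a unit matrix) gives $QV=VQ'$, i.e.\ $Q\sim Q'$.

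\paragraph{Reverse direction ($Q\sim Q'\Rightarrow M\cong M'$).} Suppose $QV=VQ'$ for some unit $V$. The natural intermediate is $N:=MV$: by construction $N\cong M$, and a one-line computation
$$PN = PMV = MQV = MVQ' = NQ'$$
shows that $N$ satisfies the very equation $PX=XQ'$ that defines $M'$. If I can conclude $N\cong M'$, the transitive chain $M\cong N\cong M'$ finishes the proof.

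\paragraph{Main obstacle and how to address it.} The hard step is therefore to show that the equation $PX=XQ'$ pins $X$ down up to right multiplication by a unit matrix; equivalently, that $W:=(M')^{-1}N\in GL_n(\Z)$. From $PN=NQ'$ and $PM'=M'Q'$ one gets $WQ'=Q'W$, so $W$ lies in the $\Q$-centralizer of $Q'$. I expect the argument to be closed by noting that $W=(M')^{-1}MV$ has integer entries precisely when $M$ and $M'$ represent the same Cayley graph (which is exactly the content of Theorems~\ref{theo:isoleft}–\ref{theo:isoright}, linking right-equivalence of representing matrices to graph isomorphism), and that the existence of the common signed permutation $P$ forces the centralizing element $W$ to be unimodular. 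This is the step I expect to dominate the proof; the forward direction is essentially bookkeeping, while the reverse direction is where the structural compatibility of the two equations $PM=MQ$ and $PM'=M'Q'$ really enters.
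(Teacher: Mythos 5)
Your forward direction is correct and is exactly the paper's argument. In the reverse direction, your computation $PN=PMV=MQV=MVQ'=NQ'$ with $N=MV\cong M$ is also precisely what the paper's proof does --- and the paper \emph{stops there}. The paper implicitly reads the reverse implication as an existence statement: given $Q'\sim Q$ and a solution $M$ of $PM=MQ$, there is a solution of $PX=XQ'$ that is right-equivalent to $M$. That weaker statement is all that is needed later: it lets one fix a single representative $Q$ per similarity class before solving $PM=MQ$ for the full parametric family of $M$'s.

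The genuine gap is in your final paragraph. The step you flag as the ``main obstacle'' --- concluding $N\cong M'$, i.e.\ that $W=(M')^{-1}N$ is a unit integer matrix --- cannot be closed, because the literal biconditional is false. Take $P=Q=Q'=\bigl(\begin{smallmatrix}0&1\\1&0\end{smallmatrix}\bigr)$, $M=I$ and $M'=2I$: both satisfy $PX=XQ$ with the very same $Q$, so $Q\sim Q'$ trivially, yet $\lvert\det M\rvert=1\neq 4=\lvert\det M'\rvert$, so $M\not\cong M'$. (The paper's own families exhibit the same phenomenon: $\bigl(\begin{smallmatrix}a&b\\b&a\end{smallmatrix}\bigr)$ satisfies $PM=MP$ for every $a,b$, giving infinitely many pairwise non-right-equivalent solutions for one fixed $Q$.) Consequently your proposed closing argument --- that $W$ ``has integer entries precisely when $M$ and $M'$ represent the same Cayley graph,'' and that the common $P$ ``forces $W$ to be unimodular'' --- is circular in its first half and false in its second: $W$ lies in the centralizer of $Q'$, but that centralizer contains non-units (e.g.\ $2I$). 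You were right to sense that the literal ``only if'' needs more than the two-line manipulation; the resolution is not to supply the missing step but to recognize that the lemma is only provable, and only used, in the weaker existence form that your displayed computation already establishes.
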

\begin{proof}
Since $PM=MQ$ and $M=M'U$ then $PM'U=M'UQ$ and $PM'=M'(UQU^{-1})=M'Q'$
with $Q'\sim Q$. Reciprocally, we know that if $PM=MQ$ and $Q'=UQU^{-1}$
then $M'=MU$ produces $PM'=M'Q'$ and $M'\cong M$.
\end{proof}


Since right equivalences leave the group invariant (Theorem \ref{theo:isoright}),
we know that for a given $P$ we only need to see how many $Q$ there are
modulo similarity. Then, knowing $P$ and $Q$ we can
solve for $M$. In \cite{Newman} the next theorems are stated, which will be very helpful in the determination of $Q$ in the following Sections \ref{sec:dimension2} and \ref{sec:dimension3}.

\begin{theorem}[\cite{Newman}, Theorem III.12, page 50]\label{theorem:newmanreducible}
Given a matrix $A$ we can find a similar matrix, made of blocks,
which is block upper triangular and moreover, that the blocks of the diagonal
all have characteristic polynomial irreducible over $\Q$ .
\end{theorem}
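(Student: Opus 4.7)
The plan is to induct on $n$, at each step peeling off a diagonal block whose characteristic polynomial is an irreducible factor of $\chi_A$ over $\Q$. The core reduction is this: given an $A$-invariant sublattice $L\subset\Z^n$ of rank $d$ that is \emph{saturated} (meaning $\Z^n/L$ is torsion-free) and on which $A$ acts with irreducible characteristic polynomial, any $\Z$-basis of $L$ can be extended to a $\Z$-basis of $\Z^n$ precisely because $\Z^n/L$ is free; the resulting change-of-basis matrix $U\in\Z^{n\times n}$ is a unit matrix and gives
$$U^{-1}AU=\begin{pmatrix}A_1&*\\0&A_2\end{pmatrix},$$
with $A_1$ the $d\times d$ matrix of $A|_L$, hence with irreducible characteristic polynomial, and $A_2$ the $(n-d)\times(n-d)$ matrix for the induced action on $\Z^n/L$. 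Applying the induction hypothesis to $A_2$ and conjugating by $\mathrm{diag}(I_d,U')$ finishes the argument.

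To build $L$, factor $\chi_A$ over $\Q$; by Gauss's lemma we may take the factors in $\Z[t]$. Fix an irreducible factor $p$ of degree $d$ and consider $V=\ker p(A)\subset\Q^n$, which is nonzero since $p\mid\chi_A$. Because $p$ is irreducible, every nonzero $v\in V$ has $A$-minimal polynomial exactly $p$. Pick such a $v$ and clear denominators so that $v\in\Z^n$. Then $L_0=\Z\langle v,Av,\dotsc,A^{d-1}v\rangle$ is $A$-invariant of rank $d$: the $A^iv$ are $\Q$-linearly independent (as $p$ is the $A$-minimal polynomial of $v$), and $A^dv$ is a $\Z$-combination of them because $p$ is monic with integer coefficients. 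Now let $L=(\Q L_0)\cap\Z^n$ be the saturation of $L_0$ inside $\Z^n$: by construction $\Z^n/L$ is torsion-free, and $L$ is still $A$-invariant since $\Q L=\Q L_0=V$ is $A$-invariant. The restriction $A|_L$ is annihilated by $p$ and has rank $d$, so its characteristic polynomial is a monic integer polynomial of degree $d$ divisible only by $p$; irreducibility of $p$ forces $\chi_{A|_L}=p$, as required.

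The main obstacle is precisely the saturation step. Working directly with $L_0$ fails in general because $\Z^n/L_0$ may have torsion, in which case no extension of a $\Z$-basis of $L_0$ to a $\Z$-basis of $\Z^n$ exists, and the corresponding change-of-basis matrix is not a unit. Enlarging $L_0$ to its saturation $L$ repairs this, but one must verify two things that could go wrong: that saturating preserves $A$-invariance (which follows from the equality $\Q L=\Q L_0=\ker p(A)$, stable under $A$) and that it does not enlarge the characteristic polynomial of $A|_L$ beyond $p$ (which uses that $p(A)$ still vanishes on $L$ together with irreducibility of $p$). Everything else—the basis extension, the block-upper-triangular shape, and the recursion on $A_2$—is routine once the sublattice $L$ has been produced.
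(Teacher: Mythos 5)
The paper does not prove this statement at all---it is quoted from Newman's \emph{Integral Matrices} (Theorem III.12) and used as a black box---so there is no proof of record to compare yours against. Your argument is a correct, self-contained proof along the standard lines: produce a rank-$d$ $A$-invariant sublattice with irreducible characteristic polynomial, saturate it so that the quotient is free, extend a basis, and recurse on the quotient block. All the load-bearing steps check out: the cyclic lattice $L_0=\Z\langle v,Av,\dotsc,A^{d-1}v\rangle$ is $A$-invariant because $p$ is monic over $\Z$; saturation preserves rank and $A$-invariance; freeness of $\Z^n/L$ gives the basis extension and hence a unimodular $U$; and $p(A|_L)=0$ together with $\operatorname{rank}L=d=\deg p$ forces $\chi_{A|_L}=p$. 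You also correctly identify the saturation step as the point where a naive argument with $L_0$ alone would fail.

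One small inaccuracy worth fixing: you assert $\Q L_0=V=\ker p(A)$, which is false in general---e.g.\ for $A=I_2$ and $p=t-1$ one has $\dim V=2$ but $\dim\Q L_0=1$; more generally $\ker p(A)$ has dimension $d$ times the number of elementary divisors that are powers of $p$. Fortunately your argument never uses this equality: $A$-invariance of $L=(\Q L_0)\cap\Z^n$ only needs $\Q L_0$ itself to be $A$-stable, which it is since it is the cyclic subspace generated by $v$. So the slip is cosmetic rather than a gap.
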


\begin{theorem}[\cite{Newman}, Theorem III.14, pag 53, The theorem of Lattimer and MacDuffee] \label{theorem:newmanirreducible}
If we have a matrix with irreducible characteristic polynomial, like
the produced by the previous theorem then the number of matrices modulo
similarity is the class number of $\Z[\theta]$ where $\theta$ is a
root of the polynomial.
\end{theorem}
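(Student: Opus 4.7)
The plan is to establish a bijection between $GL_n(\Z)$-similarity classes of matrices $A\in\Znn$ whose characteristic polynomial equals the given irreducible monic $f\in\Z[x]$, and the ideal classes of the order $\Z[\theta]$, where $\theta$ is a root of $f$. First, I would use $A$ to put a $\Z[\theta]$-module structure on $\Z^n$ by letting $\theta$ act as $A$; this is well defined because $f(A)=0$ by Cayley--Hamilton, and the resulting module is torsion-free since $f$ is irreducible. As $f$ has degree $n$, the scalar extension $\Q^n$ becomes a $1$-dimensional $\Q(\theta)$-vector space, so fixing any $\Q(\theta)$-isomorphism $\Q^n\cong\Q(\theta)$ maps the lattice $\Z^n$ to a full $\Z[\theta]$-submodule $I_A\subset\Q(\theta)$, i.e.\ a fractional $\Z[\theta]$-ideal.

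Next, I would verify that $A$ and $A'$ are similar over $\Z$ exactly when $I_A$ and $I_{A'}$ lie in the same class, where fractional ideals $I,J$ are called equivalent if $J=\alpha I$ for some $\alpha\in\Q(\theta)^\ast$. A unimodular $P$ with $PAP^{-1}=A'$ is precisely a $\Z[\theta]$-module isomorphism from $(\Z^n,A)$ to $(\Z^n,A')$; translating through the identifications with $\Q(\theta)$, such a $\Q(\theta)$-linear isomorphism of rank-$1$ modules must be multiplication by a scalar $\alpha$, yielding $I_{A'}=\alpha I_A$. Conversely, multiplication by $\alpha$ carrying $I_A$ to $I_{A'}$, expressed in $\Z$-bases of the two ideals, provides the required $P\in GL_n(\Z)$.

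For surjectivity onto ideal classes I would take any fractional ideal $I\subset\Q(\theta)$, choose a $\Z$-basis (which exists because $I$ is a finitely generated torsion-free $\Z$-module of rank $n$), and write the action of $\theta$ on $I$ as a matrix $A\in\Znn$ in that basis. Since $\theta$ satisfies $f$ on $\Q(\theta)$, so does $A$, and irreducibility of $f$ forces it to be the characteristic polynomial of $A$; changing the $\Z$-basis changes $A$ by $GL_n(\Z)$-conjugation, so a well-defined similarity class is attached to the ideal class of $I$.

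The main obstacle, in my view, is dealing with the ideal theory of the possibly non-maximal order $\Z[\theta]$: when $\Z[\theta]$ is strictly smaller than the full ring of integers of $\Q(\theta)$, unique factorization of ideals breaks down and one cannot appeal directly to the class group of invertible ideals. Instead, one must define "class number" via module-theoretic equivalence of all full $\Z[\theta]$-submodules of $\Q(\theta)$, and verify its finiteness independently, for instance as a consequence of the Jordan--Zassenhaus theorem, or more concretely from the finiteness of the class number of the maximal order $\mathcal O_{\Q(\theta)}$ combined with the finiteness of the index $[\mathcal O_{\Q(\theta)}:\Z[\theta]]$. This ensures the bijection above takes values in a finite set and the statement is meaningful.
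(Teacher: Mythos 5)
This statement is quoted verbatim from Newman's book (the Latimer--MacDuffee theorem) and the paper supplies no proof of its own, so there is no internal argument to compare against. Your sketch is the standard proof of that theorem and is essentially correct: the dictionary $A\mapsto I_A$ via the $\Z[\theta]$-module structure on $\Z^n$, the identification of unimodular conjugation with $\Z[\theta]$-module isomorphism (hence with multiplication by a scalar of $\Q(\theta)$ after embedding into the one-dimensional $\Q(\theta)$-vector space), and the surjectivity argument by writing the action of $\theta$ on a $\Z$-basis of an arbitrary full lattice are exactly the ingredients of the Latimer--MacDuffee correspondence. You also correctly identify the one genuinely delicate point, namely that $\Z[\theta]$ need not be the maximal order, so ``class number'' must be read as the number of full $\Z[\theta]$-lattices in $\Q(\theta)$ modulo scalar equivalence (a finite set, but not in general a group), rather than as the Picard group of invertible ideals; with that reading the bijection is complete. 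The only steps left implicit --- that $(\Z^n,A)$ is $\Z[\theta]$-torsion-free because any $g(\theta)\neq 0$ is coprime to the irreducible $f$, and that in the surjectivity step the minimal polynomial of the resulting matrix equals $f$ and hence so does its characteristic polynomial by a degree count --- are routine and would be easy to fill in.
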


\section{Characterization of Symmetric $\mathcal{G} (M)$ Graphs of Dimension 2}\label{sec:dimension2}

 The complete characterization of symmetric $\mathcal{G} (M)$ graphs with $M \in \mathbb{Z}^{2 \times 2}$ will be done in this section. Firstly, we will consider those which 
 are edge-transitive by means of linear automorphism. Later, we will consider those cases involving non-linear automorphisms. 
 
By Theorem \ref{theo:linealtopermutation} a graph $\mathcal{G}(M)$ is linearly edge-transitive if there is an automorphism $f$ with $f(\mathbf e_1)=\pm \mathbf e_2$ and $f(\mathbf e_2)=\pm \mathbf e_1$. Such automorphism is associated to one of the matrices:
	$\begin{pmatrix}0&1\\1&0\end{pmatrix}$,
	$\begin{pmatrix}0&-1\\1&0\end{pmatrix}$,
	$\begin{pmatrix}0&1\\-1&0\end{pmatrix}$,
	$\begin{pmatrix}0&-1\\-1&0\end{pmatrix}$.

Since $\begin{pmatrix}0&1\\-1&0\end{pmatrix}^3=\begin{pmatrix}0&-1\\1&0\end{pmatrix}$ there is only need to check $\begin{pmatrix}0&-1\\1&0\end{pmatrix}$ and $\pm \begin{pmatrix}0&1\\1&0\end{pmatrix}$.

\begin{figure}
    \begin{center}
		\includegraphics[width=.3\columnwidth]{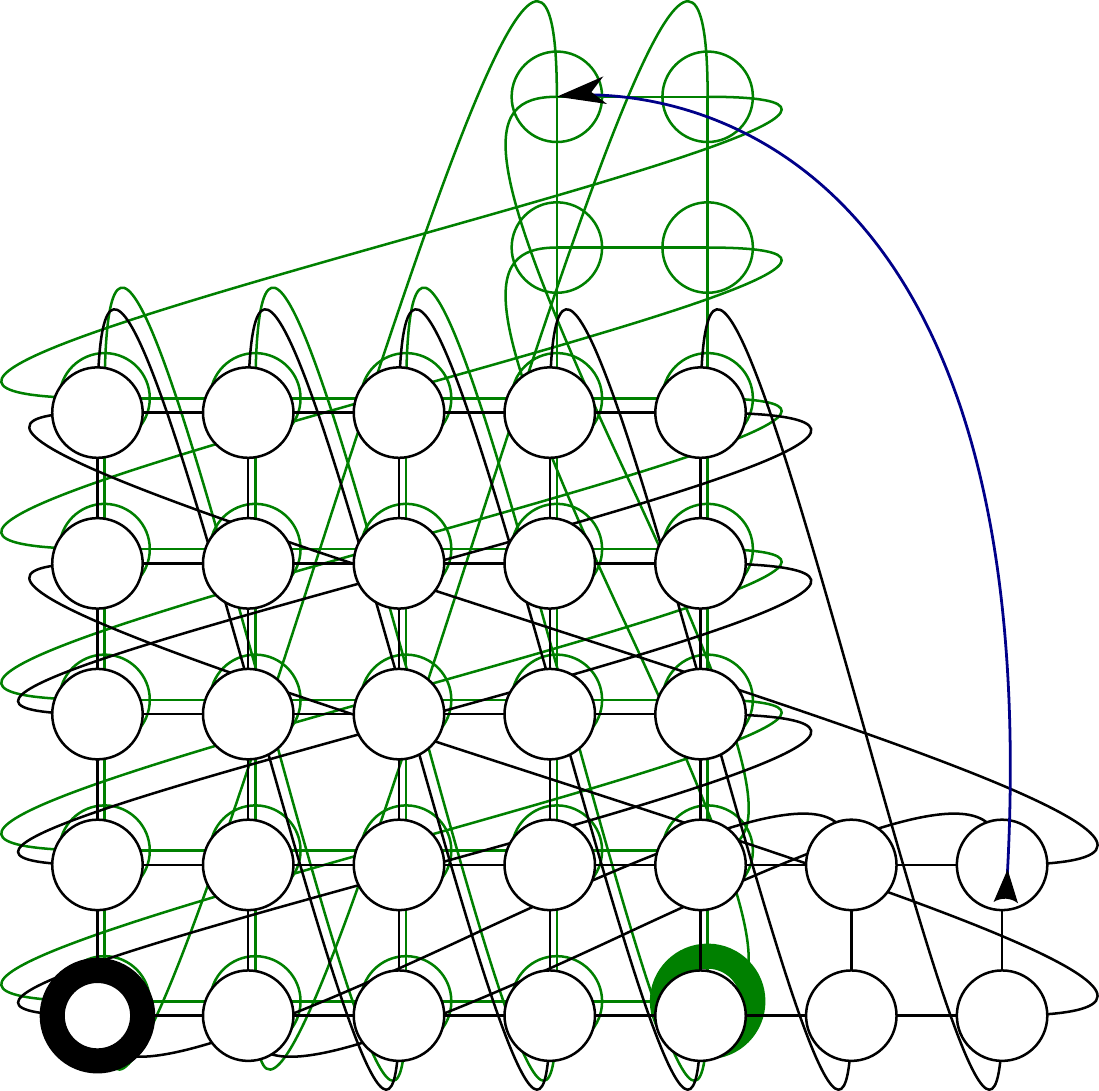}
	    \includegraphics[width=.3\columnwidth]{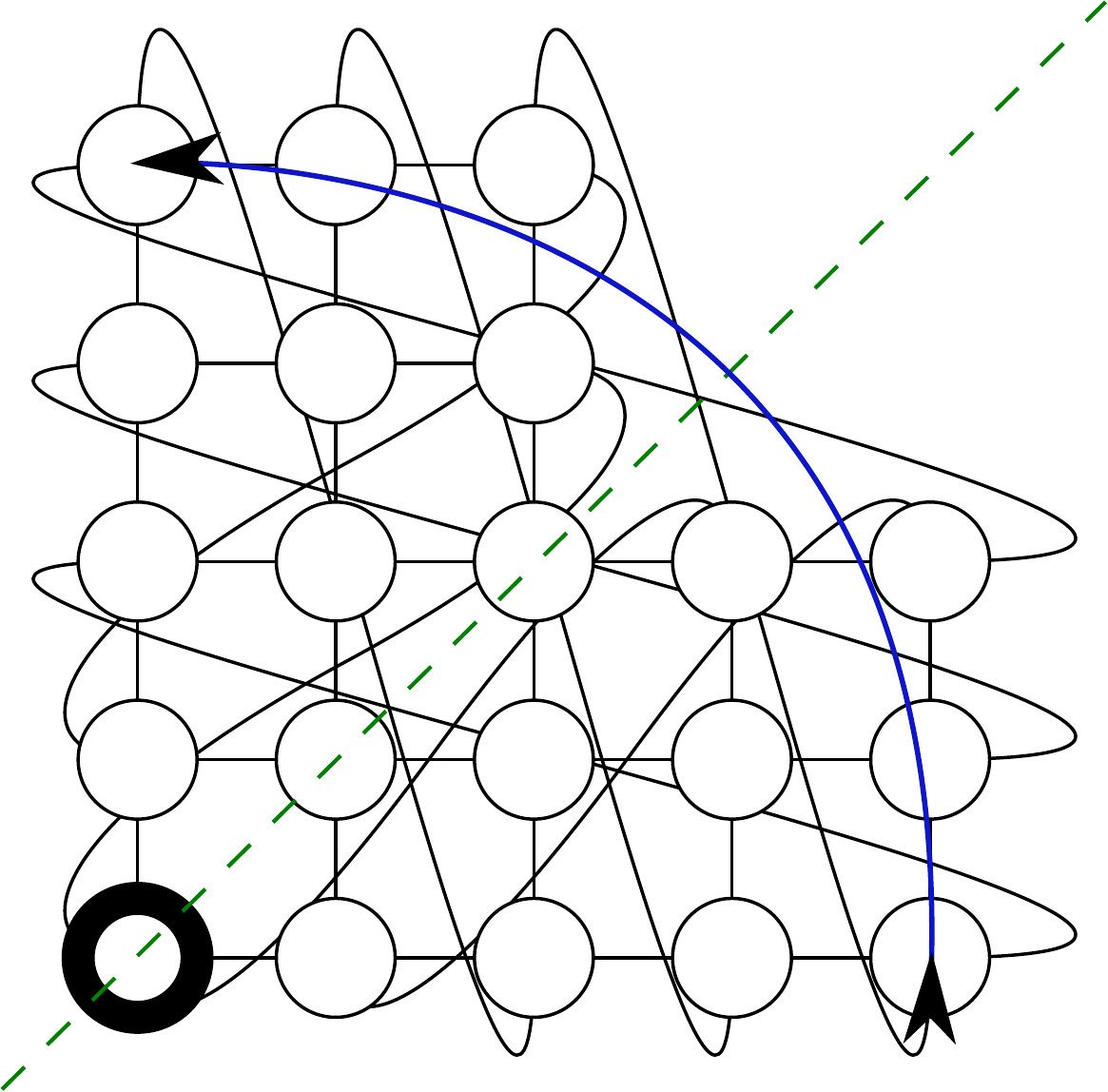}
    \end{center}
   	\caption{Linear Automorphisms of Cayley Graphs of Dimension 2.}
    \label{fig:automorphisms}
\end{figure}

\begin{theorem} \label{theo:simetricos}Let $M \in \mathbb{Z}^{2 \times 2}$ be non-singular. Then, $\mathcal{G}(M)$ is linearly edge-transitive if and only if, for some $a,b\in\mathbb Z, $
$M$ is right equivalent to one of the following matrices:

$$ M_1 = \begin{pmatrix}a& b\\b&a\end{pmatrix},\ M_2 = \begin{pmatrix}a& -b\\b&a\end{pmatrix},\ M_3 = \begin{pmatrix}a& -b\\a&b\end{pmatrix}.$$

\end{theorem}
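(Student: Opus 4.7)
The plan is to combine Theorems \ref{theo:linealtopermutation}, \ref{theo:PM=MQ}, Lemma \ref{lem:rightsimilar}, and the Newman classification results \ref{theorem:newmanreducible} and \ref{theorem:newmanirreducible}. By Theorem \ref{theo:linealtopermutation}, a linear automorphism of $\mathcal{G}(M)$ sending $\mathbf{e}_1$ to $\pm\mathbf{e}_2$ is represented by one of four signed permutation matrices with zero diagonal, and the remarks just before the statement already reduce this to the three cases $P\in\{P_1,-P_1,P_2\}$ with $P_1=\begin{pmatrix}0&1\\1&0\end{pmatrix}$ and $P_2=\begin{pmatrix}0&-1\\1&0\end{pmatrix}$. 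By Theorem \ref{theo:PM=MQ}, such a $P$ is a linear automorphism of $\mathcal{G}(M)$ iff $PM=MQ$ for some $Q\in\mathbb{Z}^{2\times 2}$, and by Lemma \ref{lem:rightsimilar} the right-equivalence classes of $M$ (for a fixed $P$) are in bijection with the $\mathbb{Z}$-similarity classes of $Q$.

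For $P=P_2$, the matrix $Q=M^{-1}P_2M$ has characteristic polynomial $x^2+1$, which is irreducible over $\mathbb{Q}$. Since the class number of $\mathbb{Z}[i]$ equals $1$, Theorem \ref{theorem:newmanirreducible} leaves a single similarity class, which I would represent by $Q=P_2$ itself; the linear system $P_2M=MP_2$ then forces $M$ into the form $M_2$. For $P=\pm P_1$, $Q$ has reducible characteristic polynomial $x^2-1$, so Theorem \ref{theorem:newmanreducible} reduces $Q$ up to similarity to an upper-triangular form $\begin{pmatrix}1&k\\0&-1\end{pmatrix}$ (the alternative diagonal order $(-1,1)$ being conjugate via $P_1$). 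I would then split on the parity of $k$: for $k$ even, conjugation by the shear $\begin{pmatrix}1&k/2\\0&1\end{pmatrix}$ clears the off-diagonal entry and gives $Q\sim\mathrm{diag}(1,-1)$; for $k$ odd, an explicit unimodular conjugation (e.g. by $\begin{pmatrix}1&0\\1&1\end{pmatrix}$ after shearing to $k=1$) gives $Q\sim P_1$. Solving $P_1M=M\,\mathrm{diag}(1,-1)$ column by column forces $M\cong M_3$, while $P_1M=MP_1$ forces $M\cong M_1$. The case $P=-P_1$ reduces immediately to $P=P_1$, since $(-P_1)M=MQ$ is equivalent to $P_1M=M(-Q)$ and $-Q$ runs through the same two similarity classes (using that $P_1\sim-P_1$ by conjugation by $\mathrm{diag}(1,-1)$).

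For the converse direction I would directly verify the three identities $P_1M_1=M_1P_1$, $P_2M_2=M_2P_2$, and $P_1M_3=M_3\,\mathrm{diag}(1,-1)$ by a single $2\times 2$ multiplication each. Each identity exhibits the integer matrix $Q$ required by Theorem \ref{theo:PM=MQ}, so $\mathcal{G}(M_i)$ admits a linear automorphism exchanging the two generator directions and is therefore linearly edge-transitive.

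The main obstacle will be the reducible case $P=\pm P_1$: Theorem \ref{theorem:newmanreducible} only yields an upper-triangular representative and does not by itself count the similarity classes, so I must verify by hand that the matrices $\begin{pmatrix}1&k\\0&-1\end{pmatrix}$ partition into exactly two $\mathbb{Z}$-similarity classes, separated by the parity of $k$ and realized respectively by $\mathrm{diag}(1,-1)$ and $P_1$. This parity dichotomy is precisely what distinguishes the family $M_3$ from the family $M_1$ in the final statement, so pinning it down is what forces the classification to consist of exactly the three canonical families.
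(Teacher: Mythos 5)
Your proposal is correct and follows essentially the same route as the paper: reduce to the three signed permutations $\pm P_1, P_2$, pass to $Q$ via Theorem \ref{theo:PM=MQ} and Lemma \ref{lem:rightsimilar}, classify $Q$ up to $\mathbb{Z}$-similarity using Theorems \ref{theorem:newmanreducible} and \ref{theorem:newmanirreducible} (two classes for $\lambda^2-1$, one for $\lambda^2+1$ since $\mathbb{Z}[i]$ has class number $1$), and solve $PM=MQ$ to obtain $M_1$, $M_2$, $M_3$. Your explicit parity argument for the reducible case and the negation trick for $-P_1$ are minor refinements of steps the paper treats by direct assertion or direct computation.
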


\begin{proof} Let $M=\begin{pmatrix}a&b\\c&d\end{pmatrix}$. We will determine $Q$ and solve the system $PM=MQ$; which by Theorem \ref{theo:PM=MQ} is a necessary and sufficient condition to be linearly edge-transitive.
The characteristic polynomial of $\pm \begin{pmatrix}0&1\\1&0\end{pmatrix}$ is $\lambda^2-1$, and the one of  $\begin{pmatrix}0&-1\\1&0\end{pmatrix}$ is $\lambda^2+1$.
As $PM=MQ$ it must be the characteristic polynomial of both $P$ and $Q$.
By Lemma \ref{lem:rightsimilar} we can choose any matrix similar to $Q$ and obtain a matrix right-equivalent to $M$.
Therefore, we have two cases:
\begin{itemize}
\item $P=\pm \begin{pmatrix}0&1\\1&0\end{pmatrix}$,
    $\lambda^2-1=(\lambda+1)(\lambda-1)$,
    being reducible over $\Q$, by Theorem \ref{theorem:newmanreducible} $Q$ must be similar
    to a matrix $Q'=\begin{pmatrix}1&p\\0&-1\end{pmatrix}$, which is similar
    to either $\begin{pmatrix}1&0\\0&-1\end{pmatrix}$ or to
    $\begin{pmatrix}1&1\\0&-1\end{pmatrix}\sim\begin{pmatrix}0&1\\1&0\end{pmatrix}$.
	In the first case, depending on $P$ we obtain $M$ equal to $\begin{pmatrix}a& b\\b&a\end{pmatrix}$ or to $\begin{pmatrix}a& b\\-b&-a\end{pmatrix}\cong \begin{pmatrix}a& -b\\-b&a\end{pmatrix}$; which are the same under the variable change $b\mapsto -b$.
	In the second case, the same happens for the possible matrices $\begin{pmatrix}a& -b\\a&b\end{pmatrix}$ and $\begin{pmatrix}a& b\\-a&b\end{pmatrix}\cong \begin{pmatrix}b& -a\\b&a\end{pmatrix}$ and the variable change $a\mapsto b$, $b\mapsto a$.
\item $P=\begin{pmatrix}0&-1\\1&0\end{pmatrix}$,
    $\lambda^2+1$ which is irreducible over $\Q$ and the class number
    of $\Z[i]$ is 1, so by Theorem \ref{theorem:newmanirreducible} $Q$ must be similar to $P$.
	The only possible solutions are $\begin{pmatrix}a& -b\\b&a\end{pmatrix}$.
\end{itemize}
\end{proof}

The first two cases of Theorem \ref{theo:simetricos}, $\mathcal G(M_1)$ and $\mathcal G(M_2)$, are depicted in Figure \ref{fig:automorphisms}.
As it was proved in \cite{ISIT10}, $\mathcal G(M_1)$ and $\mathcal G(M_3)$ are isomorphic to the Kronecker product of two cycles. Furthermore $\mathcal G(M_2)$ is isomorphic to the Gaussian graph introduced in \cite{IEEE_TC}.

\subsection{Edge-transitive $\mathcal{G}(M)$ Graphs of Dimension 2 by Nonlinear Automorphisms}\label{sec: no lineales}

In this subsection we focus on those $\mathcal{G}(M)$ graphs with nontrivial 4-cycles, and hence, according to Theorem \ref{theo:linear} their group of automorphisms could contain nonlinear automorphisms.
Clearly, if there is a nontrivial 4-cycle then there exist $\mathbf a,\mathbf b\in \pm\mathcal B_n$ which fulfill:

\begin{enumerate}
\item \label{caso4a} $4\mathbf a \equiv \mathbf 0 \pmod{M}$
\item \label{caso3ab} $3\mathbf a+\mathbf b \equiv \mathbf 0 \pmod{M}$
\item \label{caso2a2b}$2\mathbf a+2\mathbf b \equiv \mathbf 0 \pmod{M}$
\end{enumerate}

If we consider $u\mathbf a+v\mathbf b\equiv \mathbf 0 \pmod{M}$ it means that there exists $\mathbf x \in \mathbb{Z}^{2}$ such that
$\mathbf k=\begin{pmatrix}u\\v\end{pmatrix}=M\mathbf x$. Now, let $\gcd(\mathbf x)=\gcd(x_1,\dotsc,x_n)$,
$\mathbf x'=\frac{\mathbf x}{\gcd\mathbf x}$ and $\mathbf k'=\frac{\mathbf k}{\gcd\mathbf x}$,
having $\mathbf k'=M\mathbf x'$. As $\gcd\mathbf x'=1$ we can build a unit matrix $U$
with $\mathbf x'$ as one of its columns, and therefore $M'=MU$ has $\mathbf k'$ as a column.
In addition, Theorem \ref{theo:isoleft} allows to choose each component positive.\par

We will begin with item (\ref{caso2a2b}). In this case we obtain the matrix $M=\begin{pmatrix}u&2\\v&2\\\end{pmatrix}$. If $v=2k$ we have
that $\begin{pmatrix}u&2\\2k&2\\\end{pmatrix}$ is right equivalent to $\begin{pmatrix}u-v&2\\0&2\\\end{pmatrix}$. On the other hand, if
$v=2k+1$ then $\begin{pmatrix}u&2\\2k+1&2\\\end{pmatrix}$ is right equivalent to $\begin{pmatrix}u-v+1&2\\1&2\\\end{pmatrix}$. Both matrices generate the same graph
and there is a nonlinear isomorphism between them.
In addition note that if the first column has odd weight then $\begin{pmatrix}0&1\\1&0\end{pmatrix}\begin{pmatrix}2k+1&2\\0&2\end{pmatrix}\cong \begin{pmatrix}2k+2&2\\1&2\end{pmatrix}$, so there is a linear isomorphism in addition to the nonlinear one. Hence, the only pairs non \'{A}d\'{a}m isomorphic are $(\begin{pmatrix}2k+1&2\\1&2\end{pmatrix},\begin{pmatrix}2k&2\\0&2\end{pmatrix})$, which correspond with the ones in \cite{Delorme}.
Furthermore, the matrices of these non \'{A}d\'{a}m isomorphic graphs satisfy $\begin{pmatrix}0&1\\1&0\end{pmatrix}M\cong M$, thus by Theorem \ref{theo:PM=MQ} they are actually linearly edge-transitive.

For the items (\ref{caso4a}) and (\ref{caso3ab}) we begin proving that if there is exactly one nontrivial 4-cycle, then all automorphisms are linear. Furthermore note that these results are also valid in any number of dimensions.

\begin{lemma}\label{lem:negateinverse} Let $f\in Aut(\mathcal{G}(M),\mathbf 0)$ and $\mathbf a\in\pm\mathcal B_n$. If $\forall\mathbf b\in\pm\mathcal B_n\setminus\{\mathbf a,-\mathbf a\},\ f(-\mathbf b)=-f(\mathbf b)$
then $f^{-1}(-\mathbf a)=-f^{-1}(\mathbf a)$.
\end{lemma}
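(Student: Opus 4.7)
The plan is to prove a slightly stronger statement, namely that $f$ itself commutes with negation on \emph{all} of $\pm\mathcal B_n$ (not just on $\pm\mathcal B_n\setminus\{\mathbf a,-\mathbf a\}$), and then deduce the analogous property for $f^{-1}$ by bijectivity.

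First I would note that, because $f\in Aut(\mathcal{G}(M),\mathbf 0)$ and the neighborhood of $\mathbf 0$ in $\mathcal{G}(M)$ is precisely $\pm\mathcal B_n$, Theorem \ref{theo:neighborhoodpreserved} gives that $f$ restricts to a bijection of $\pm\mathcal B_n$. The set $\pm\mathcal B_n$ is the disjoint union of the $n$ ``negation pairs'' $\{\mathbf e_i,-\mathbf e_i\}$. By hypothesis, for every $\mathbf b\in\pm\mathcal B_n\setminus\{\mathbf a,-\mathbf a\}$ the pair $\{\mathbf b,-\mathbf b\}$ is sent by $f$ to $\{f(\mathbf b),f(-\mathbf b)\}=\{f(\mathbf b),-f(\mathbf b)\}$, which is again a negation pair. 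Hence the $n-1$ negation pairs in $\pm\mathcal B_n\setminus\{\mathbf a,-\mathbf a\}$ map bijectively to $n-1$ distinct negation pairs of $\pm\mathcal B_n$, covering all but two elements. Those two leftover elements are exactly $f(\mathbf a)$ and $f(-\mathbf a)$, and since what remains of $\pm\mathcal B_n$ after removing $n-1$ whole negation pairs is itself a single negation pair, we obtain $f(-\mathbf a)=-f(\mathbf a)$.

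At this point $f(-\mathbf c)=-f(\mathbf c)$ holds for every $\mathbf c\in\pm\mathcal B_n$. Applying $f^{-1}$ to both sides and setting $\mathbf d=f(\mathbf c)$ (which also ranges over all of $\pm\mathcal B_n$ by the bijectivity of $f|_{\pm\mathcal B_n}$) yields $f^{-1}(-\mathbf d)=-f^{-1}(\mathbf d)$ for every $\mathbf d\in\pm\mathcal B_n$; specializing to $\mathbf d=\mathbf a$ gives the lemma.

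The only subtle step is the counting/covering argument in the middle paragraph, where one must observe that removing $n-1$ entire negation pairs from the $n$-pair partition of $\pm\mathcal B_n$ leaves exactly one negation pair behind, forcing $f(\mathbf a)$ and $f(-\mathbf a)$ to be negatives of each other. Degenerate situations in the quotient (e.g.\ $2\mathbf a\equiv\mathbf 0\pmod M$, making $\mathbf a=-\mathbf a$) make the conclusion trivial and do not affect the argument. Everything else is just unwinding the bijection.
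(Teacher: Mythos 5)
Your route is genuinely different from the paper's. The paper never attempts to show $f(-\mathbf a)=-f(\mathbf a)$; it works directly with $f^{-1}$, splitting into three cases according to whether $f^{-1}(\mathbf a)$ or $f^{-1}(-\mathbf a)$ lies outside $\{\pm\mathbf a\}$ (in which case the hypothesis can be applied to $\mathbf b=f^{-1}(\pm\mathbf a)$, which is a generator because $f^{-1}$ also fixes $\mathbf 0$ and so permutes $N(\mathbf 0)=\pm\mathcal B_n$), or whether both lie inside $\{\pm\mathbf a\}$ (in which case bijectivity forces $\{f^{-1}(\mathbf a),f^{-1}(-\mathbf a)\}=\{\mathbf a,-\mathbf a\}$, whose two elements sum to $\mathbf 0$). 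That argument is short and makes no counting assumptions. You instead aim for the stronger statement that $f$ itself commutes with negation on all of $\pm\mathcal B_n$ and then transfer it to $f^{-1}$; the transfer step is fine, and the stronger statement, once established, is in fact what the subsequent theorem wants (it applies the lemma to $f^{-1}$ precisely in order to recover the corresponding property of $f$).

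There is, however, a gap in the covering argument, and it is not removed by the one degenerate case you mention. The paper explicitly allows $2\mathbf e_i\equiv\mathbf 0\pmod M$ and $\mathbf e_i\equiv\pm\mathbf e_j\pmod M$, and in those situations $\pm\mathcal B_n$ is \emph{not} a disjoint union of $n$ two-element negation pairs: some negation orbits are singletons and some coincide. The problematic configuration is not $2\mathbf a\equiv\mathbf 0$ (which does trivialize the desired identity for $f$) but the presence of \emph{other} order-two generators. If $\mathbf a\neq-\mathbf a$ while $\mathbf v,\mathbf w\in\pm\mathcal B_n$ are distinct with $2\mathbf v\equiv 2\mathbf w\equiv\mathbf 0$, then what your bijection argument actually delivers is only the set identity $\{f(\mathbf a),f(-\mathbf a)\}=-\{f(\mathbf a),f(-\mathbf a)\}$, and this is also satisfied by $\{f(\mathbf a),f(-\mathbf a)\}=\{\mathbf v,\mathbf w\}$, which does not give $f(-\mathbf a)=-f(\mathbf a)$. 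The case can be excluded, but it requires an extra step: for every order-two $\mathbf b\in\pm\mathcal B_n\setminus\{\pm\mathbf a\}$ the hypothesis reads $f(\mathbf b)=f(-\mathbf b)=-f(\mathbf b)$, so $f$ maps the set of order-two generators (none of which is $\pm\mathbf a$ when $\mathbf a\neq-\mathbf a$) injectively into, hence onto, itself, and injectivity of $f$ on $\pm\mathcal B_n$ then forbids $f(\mathbf a)$ from landing on an order-two generator. With that addition (and analogous bookkeeping when generators coincide) your proof closes; without it, the sentence claiming that degenerate situations ``do not affect the argument'' asserts exactly what needs to be proved.
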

\begin{proof}
We check three cases.
\begin{itemize}
\item Case $f^{-1}(\mathbf a)\neq \pm\mathbf a$. Applying the hypothesis we get $\mathbf a=f(f^{-1}(\mathbf a))=-f(-f^{-1}(\mathbf a))$. Then $f^{-1}(-\mathbf a)=f^{-1}(f(-f^{-1}(\mathbf a)))=-f^{-1}(\mathbf a)$.
\item Case $f^{-1}(-\mathbf a)\neq \pm\mathbf a$. Applying the hypothesis we get $-\mathbf a=f(f^{-1}(-\mathbf a))=-f(-f^{-1}(-\mathbf a))$. Then $f^{-1}(\mathbf a)=f^{-1}(f(-f^{-1}(-\mathbf a)))=-f^{-1}(-\mathbf a)$.
\item Case $\{f^{-1}(\mathbf a),f^{-1}(-\mathbf a)\}\subseteq \{\pm \mathbf a\}$.
	As $f^{-1}$ is a bijection we have the equality
	$\{f^{-1}(\mathbf a),f^{-1}(-\mathbf a)\}=\{\pm \mathbf a\}$.
	Now $f^{-1}(\mathbf a)+f^{-1}(-\mathbf a)=\mathbf a+(-\mathbf a)=\mathbf 0$.
\end{itemize}
\end{proof}

\begin{theorem}If the only nontrivial 4-cycle is $4\mathbf a\equiv \mathbf 0\pmod M$ or $3\mathbf a+\mathbf b\equiv \mathbf 0\pmod M$ then
$$Aut(\mathcal{G}(M))=LAut(\mathcal{G}(M)).$$
\end{theorem}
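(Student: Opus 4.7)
The plan is to reduce to an automorphism $f$ fixing $\mathbf 0$ via Lemma \ref{lem:translatedautomorphism}, show it is additive on $\pm\mathcal B_n$, and conclude through Lemma \ref{lem:commlinear} and Theorem \ref{theo:linealtopermutation}.

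First I would classify the common neighborhoods $N(\mathbf u, \mathbf v)$ for $\mathbf u \neq \mathbf v$ in $\pm\mathcal B_n$. Parallel to the proof of Theorem \ref{theo:neighborhoodadjacency}, writing $\mathbf v' \in N(\mathbf u, \mathbf v)$ produces a 4-cycle relation $\mathbf u - \mathbf v + \mathbf x - \mathbf y \equiv \mathbf 0$; trivial cancellations yield the two candidates $\{\mathbf 0, \mathbf u + \mathbf v\}$, and any extra element must arise from matching the multiset $\{\mathbf u, -\mathbf v, \mathbf x, -\mathbf y\}$ against the unique nontrivial 4-cycle. In Case 1 ($4\mathbf a \equiv \mathbf 0$) the only pair gaining an extra is $\{\mathbf a, -\mathbf a\}$, picking up $2\mathbf a$. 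In Case 2 ($3\mathbf a + \mathbf b \equiv \mathbf 0$) the extras arise only for $\{\mathbf a, -\mathbf a\}$ (extras $2\mathbf a$ and $\mathbf a + \mathbf b$) and for $\{\pm \mathbf a, \mp \mathbf b\}$ (extra $\pm 2\mathbf a$). The crucial consequence for both cases is that $|N(\mathbf c, -\mathbf c)| = 1$ precisely when $\mathbf c \neq \pm \mathbf a$.

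Combining Theorem \ref{theo:neighborhoodpreserved} with this classification, for $\mathbf c \neq \pm \mathbf a$ we obtain $\{f(\mathbf c), f(-\mathbf c)\} = \{\mathbf d, -\mathbf d\}$ for some $\mathbf d \neq \pm \mathbf a$, whence $f(-\mathbf c) = -f(\mathbf c)$ and $f(\mathbf c) \notin \{\pm \mathbf a\}$. Bijectivity of $f$ on $\pm \mathcal B_n$ then forces $f(\{\pm \mathbf a\}) = \{\pm \mathbf a\}$, extending the negation law to $\mathbf a$; this is also what Lemma \ref{lem:negateinverse} delivers once applied to both $f$ and $f^{-1}$. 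The identity $f(2\mathbf u) = 2 f(\mathbf u)$ then follows by applying this negation law to the translated automorphism $f_{\mathbf u}$ at $\mathbf u$, exactly as in the closing lines of Lemma \ref{lem:aristacommfull}.

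It remains to verify additivity $f(\mathbf u + \mathbf v) = f(\mathbf u) + f(\mathbf v)$ on distinct pairs in $\pm \mathcal B_n$. Size-1 opposite pairs reduce immediately to the negation law, and size-2 pairs are handled verbatim as in Lemma \ref{lem:aristacommfull}: since negation prevents a non-opposite pair from mapping to an opposite one, $(f(\mathbf u), f(\mathbf v))$ is again size-2 non-opposite and the singleton extras of the two neighborhoods must coincide. The main obstacle, specific to Case 2, is the size-3 non-opposite pairs $\{\pm \mathbf a, \mp \mathbf b\}$. There I would first use additivity on the size-2 pair $(\mathbf a, \mathbf b)$ together with the identity $\mathbf a + \mathbf b \equiv -2\mathbf a$ and $f(-2\mathbf a) = -2 f(\mathbf a)$ to deduce $f(\mathbf b) = -3 f(\mathbf a) = \pm \mathbf b$, with sign matching that of $f(\mathbf a)$; then the 3-element neighborhoods $\{\mathbf 0, 2\mathbf a, \mathbf a - \mathbf b\}$ and $\{\mathbf 0, \pm 2\mathbf a, \pm(\mathbf a - \mathbf b)\}$ are matched with the help of $f(2\mathbf a) = 2 f(\mathbf a)$ to force $f(\mathbf a - \mathbf b) = f(\mathbf a) - f(\mathbf b)$. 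Lemma \ref{lem:commlinear} upgrades this to a group automorphism and Theorem \ref{theo:linealtopermutation} identifies $f$ with a signed permutation matrix, placing $f$ in $LAut(\mathcal{G}(M))$.
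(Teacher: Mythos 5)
Your proposal is correct and follows essentially the same route as the paper's proof: classify the common neighborhoods forced by the unique nontrivial 4-cycle, deduce the negation and doubling laws, show $f(\{\pm\mathbf a\})=\{\pm\mathbf a\}$, establish pairwise additivity with the pairs involving $\pm\mathbf a$ treated separately, and conclude via Lemma \ref{lem:commlinear}. The only cosmetic differences are that you obtain $f(\{\pm\mathbf a\})=\{\pm\mathbf a\}$ directly from bijectivity and handle the exceptional pairs $\{\pm\mathbf a,\mp\mathbf b\}$ by first computing $f(\mathbf b)=-3f(\mathbf a)=\pm\mathbf b$, whereas the paper eliminates the unwanted candidates in $N(\mathbf a,\mathbf x)\subseteq\{\mathbf 0,\mathbf a+\mathbf x,2\mathbf a\}$ by injectivity.
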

\begin{proof}We proceed proving several claims iteratively.
\begin{enumerate}
\item\label{neighxy} For all $\mathbf x,\mathbf y\in \pm\mathcal B_n\setminus \{\mathbf a,-\mathbf a\}$, $\mathbf x\neq\mathbf y$, $N(\mathbf x,\mathbf y)=\{\mathbf 0,\mathbf x+\mathbf y\}$.

We have $N(\mathbf x,\mathbf y)=\{\mathbf v\mid \mathbf v=\mathbf x+\mathbf p=\mathbf y+\mathbf q,\ \mathbf p,\mathbf q\in\pm\mathcal B_n\}$. That is, we look for 4-cycles $\mathbf x+\mathbf p-\mathbf y-\mathbf q=\mathbf 0$. The trivial ones are $\mathbf x=-\mathbf p$ and $\mathbf x=\mathbf q$ which respectively give $\mathbf v=\mathbf 0$ and $\mathbf v=\mathbf x+\mathbf y$.
	If it is the nontrivial 4-cycle $4\mathbf a=\mathbf 0$ then we have $\{\mathbf x,\mathbf y\}=\{\mathbf a,-\mathbf a\}$, contradicting the hypothesis.
	If it is the nontrivial 4-cycle $3\mathbf a+\mathbf b=\mathbf 0$, then at least one of $\mathbf x$ or $\mathbf y$ is $\pm\mathbf a$.

\item\label{neighax} For all $\mathbf x\in \pm\mathcal B_n\setminus \{\mathbf a,-\mathbf a\}$, $N(\mathbf a,\mathbf x)\subseteq\{\mathbf 0,\mathbf a+\mathbf x,2\mathbf a\}$.

In this case we look for nontrivial 4-cycles $\mathbf a+\mathbf p-\mathbf x-\mathbf q=\mathbf 0$. As $\mathbf x\not\in\{\pm \mathbf a\}$, we have $\mathbf p=-\mathbf q=\mathbf a$ and then $\mathbf v=\mathbf a+\mathbf p=2\mathbf a$. Note that if we have the cycle $4\mathbf a=\mathbf 0$ then we only have the trivial solutions.

\item\label{neighaa} $N(\mathbf a,-\mathbf a)=\{\mathbf 0,\pm 2\mathbf a\}$.

In this case we look for nontrivial 4-cycles $\mathbf a+\mathbf p+\mathbf a-\mathbf q=\mathbf 0$. At least one of $\mathbf p,-\mathbf q$ is equal to $\mathbf a$. If $\mathbf p=\mathbf a$ then $\mathbf v=\mathbf a+\mathbf p=2\mathbf a$. If $-\mathbf q=\mathbf a$ then $\mathbf v=-\mathbf a+\mathbf q=-2\mathbf a$.

\item\label{oppx} For all $\mathbf x\in\pm\mathcal B_n\setminus \{\pm\mathbf a\}$, $f\in Aut(\mathcal{G}(M),\mathbf 0)$, $f(-\mathbf x)=-f(\mathbf x)$.

We have $4\mathbf x\neq \mathbf 0$, since it would be another nontrivial 4-cycle. Hence $\mathbf x\neq -\mathbf x$ and by item (\ref{neighxy}) $N(\mathbf x,-\mathbf x)=\{\mathbf 0\}$. By Theorem \ref{theo:neighborhoodpreserved} we have $N(f(\mathbf x),f(-\mathbf x))=\{\mathbf 0\}$, thus $f(\mathbf x)+f(-\mathbf x)=\mathbf 0$.

\item\label{single} For all $\mathbf x\in\pm\mathcal B_n$, $f\in Aut(\mathcal{G}(M),\mathbf 0)$, $f(-\mathbf x)=-f(\mathbf x)$ and $f(2\mathbf x)=2f(\mathbf x)$.

First apply Lemma \ref{lem:negateinverse} together item (\ref{oppx}) to $f^{-1}$ to get $\forall f\in Aut(\mathcal{G}(M),\mathbf 0),\ \allowbreak f(-\mathbf a)=-f(\mathbf a)$.
Then considering the automorphism $f'$ defined by $f'(\mathbf v)=f(\mathbf x+\mathbf v)-f(\mathbf x)$ like in the proof of Lemma \ref{lem:aristacommfull} we obtain that $f(2\mathbf x)=2f(\mathbf x)$.

\item\label{fa} For all $f\in Aut(\mathcal{G}(M),\mathbf 0)$, $f(\pm\mathbf a)=\pm \mathbf a$.

From item (\ref{neighaa}) we have $N(\mathbf a,-\mathbf a)=\{\mathbf 0,\pm 2\mathbf a\}$ with $\mathbf 0\neq \pm2\mathbf a$. By Theorem \ref{theo:neighborhoodpreserved} we get $N(f(\mathbf a),f(-\mathbf a))=\{\mathbf 0,f(\pm 2\mathbf a)\}$ with $\mathbf 0\neq f(\pm2\mathbf a)$. By items (\ref{neighxy}, \ref{neighax}, \ref{neighaa}) we get $\{f(\mathbf a),f(-\mathbf a)\}=\{\pm\mathbf a\}$.

\item\label{pairs} For all $f\in Aut(\mathcal{G}(M),\mathbf 0)$, $\mathbf x,\mathbf y\in\pm\mathcal B_n$, $f(\mathbf x+\mathbf y)=f(\mathbf x)+f(\mathbf y)$.

If $\mathbf x=\mathbf y$ it is item (\ref{single}). Otherwise if neither of $\mathbf x,\mathbf y$ is in $\{\pm \mathbf a\}$ we proceed like the first step of the proof of Lemma \ref{lem:aristacommfull}; from item (\ref{neighxy}) we get
$N(\mathbf x,\mathbf y)=\{\mathbf 0,\mathbf x+\mathbf y\}$, hence by Theorem \ref{theo:neighborhoodpreserved} $N(f(\mathbf x),f(\mathbf y))=\{f(\mathbf 0),f(\mathbf x+\mathbf y)\}=\{\mathbf 0,f(\mathbf x)+f(\mathbf y)\}$.
As $f(\mathbf 0)=\mathbf 0$ we have that $f(\mathbf x+\mathbf y)=f(\mathbf x)+f(\mathbf y)$.
Now if some is in $\{\pm \mathbf a\}$, we assume without loss of generality that $\mathbf y=\mathbf a$ and $\mathbf x\not\in\{\pm \mathbf a\}$.
From item (\ref{neighax}) we have $N(\mathbf a,\mathbf x)\subseteq\{\mathbf 0,\mathbf a+\mathbf x,2\mathbf a\}$. And by Theorem \ref{theo:neighborhoodpreserved} that $N(f(\mathbf a),f(\mathbf x))\subseteq\{\mathbf 0,f(\mathbf a+\mathbf x),f(2\mathbf a)\}$.
By item (\ref{fa}) we have $N(f(\mathbf a),f(\mathbf x))\subseteq\{\mathbf 0,f(\mathbf a)+f(\mathbf x),2f(\mathbf a)\}$. As $f(2\mathbf a)=2f(\mathbf a)$ (item (\ref{single})) we have that $f(\mathbf a+\mathbf x)=f(\mathbf a)+f(\mathbf x)$.

\item $Aut(\mathcal{G}(M))=LAut(\mathcal{G}(M))$.

Apply Lemma \ref{lem:commlinear} to item (\ref{pairs}).
\end{enumerate}
\end{proof}

Finally, there are a few marginal cases in which the graph contains several nontrivial 4-cycles.
These matrices are the matrices whose both columns correspond to nontrivial 4-cycles and their left divisors.
These matrices can be built by selecting two columns in the set:
$$ C=\left\{
        \begin{pmatrix}4\\0\end{pmatrix},
        \begin{pmatrix}3\\1\end{pmatrix},
        \begin{pmatrix}1\\3\end{pmatrix},
        \begin{pmatrix}0\\4\end{pmatrix},
        \begin{pmatrix}3\\-1\end{pmatrix},
        \begin{pmatrix}1\\-3\end{pmatrix},
        \begin{pmatrix}2\\0\end{pmatrix},
        \begin{pmatrix}0\\2\end{pmatrix}
    \right\}.$$

A complete study of the following cases, shows as that most of the combinations
are edge-transitive. However, there are cases that lack of a nonlinear
automorphism, leading to non-edge-transitive graphs.\par

Up to isomorphism, the bidimensional $\mathcal{G} (M)$ graphs with 2 different nontrivial solutions for
4-cycles are:
	\begin{figure}
		\begin{center}
		\includegraphics[width=.3\textwidth]{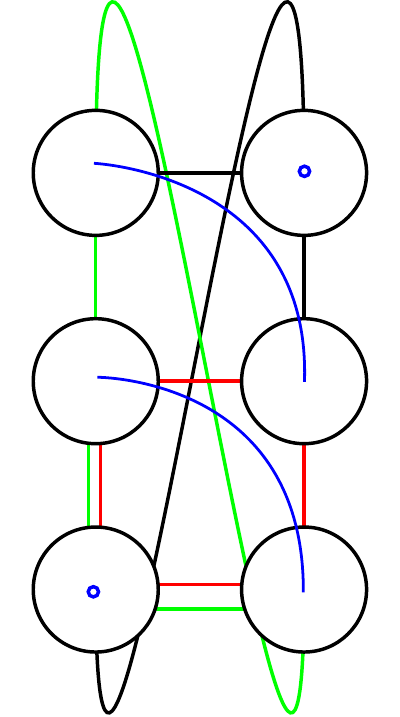}
		\end{center}
		\caption[A nonlinear automorphism in $( (2,0)^t , (-1,3)^t)$]
			{A nonlinear automorphism of $\mathcal{G}(M)$, where
			$M = \begin{pmatrix}2&-1\\0&3\end{pmatrix}$.}
		\label{fig:automnl2103}
	\end{figure}
	\begin{figure}
		\begin{center}
		\includegraphics[width=.5\textwidth]{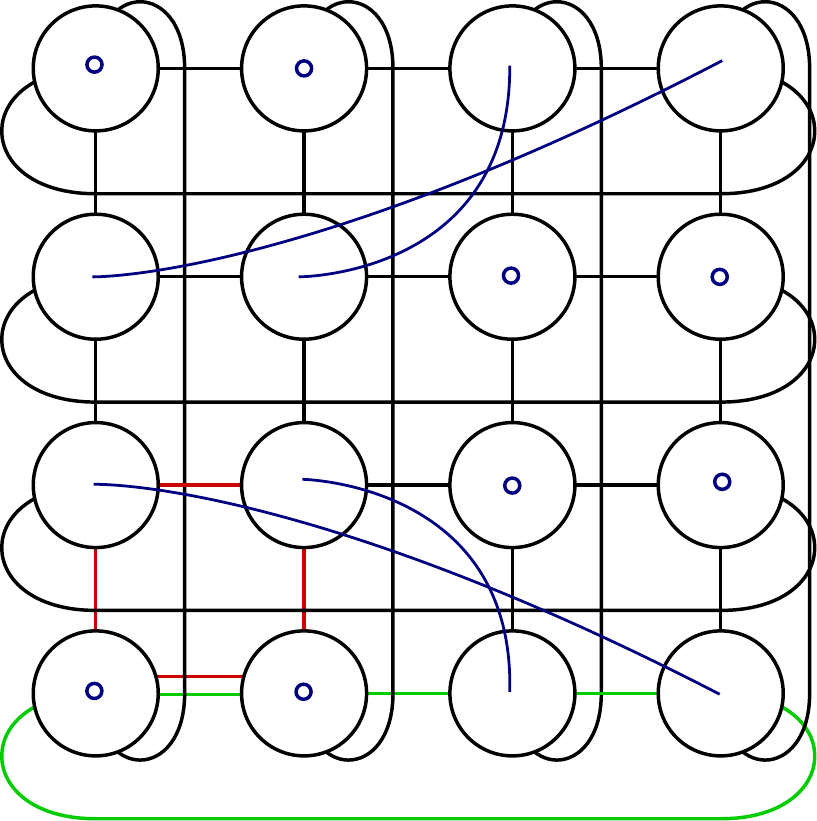}
		\end{center}
		\caption{A nonlinear automorphism of the square torus of side 4.}
		\label{fig:automnl4004}
	\end{figure}
\begin{itemize}
\item With nontrivial 4 cycles but without nonlinear automorphisms.
	$$
	\begin{pmatrix}1&0\\3&2\\\end{pmatrix},
	\begin{pmatrix}2&0\\0&2\\\end{pmatrix},
	\begin{pmatrix}4&3\\0&1\\\end{pmatrix},
	\begin{pmatrix}4&1\\0&3\\\end{pmatrix}$$
\item With a nonlinear automorphism, which makes them edge-transitive,
	$$\begin{pmatrix}4&0\\0&2\\\end{pmatrix},
	\begin{pmatrix}3&3\\1&-1\\\end{pmatrix} \cong
	\begin{pmatrix}2&-1\\0&3\\\end{pmatrix},
	\begin{pmatrix}3&1\\1&2\\\end{pmatrix}$$
	with an example in Figure \ref{fig:automnl2103}.
	The first two have degree 3. Their associated Cayley multigraphs do not have nonlinear automorphisms.
	In the figure, we show in blue a nonlinear automorphism involution, which fixes two vertices and maps the nontrivial green 4-cycle into the red 4-cycle.
\item With a nonlinear automorphism, but their linear automorphisms already make
	them edge-transitive,
	$$\begin{pmatrix}4&0\\0&4\\\end{pmatrix},
	\begin{pmatrix}3&1\\1&3\\\end{pmatrix},
	\begin{pmatrix}3&-1\\1&3\\\end{pmatrix}$$
	with the torus as example in Figure \ref{fig:automnl4004}.
\end{itemize}

\section{Linearly Edge-Transitive $\mathcal{G} (M)$ Graphs of Dimension 3}\label{sec:dimension3}

This section provides a complete characterization of those $\mathcal{G} (M)$ graphs with $M \in \mathbb{Z}^{3 \times 3}$ being linearly edge-transitive.

\begin{lemma}\label{lem:symorder3} Given $M\in\Z^{3\times 3}$, $\mathcal{G}(M)$ is linearly edge-transitive if and only if there exists a signed permutation matrix of order 3 in $LAut(\mathcal{G}(M),\mathbf 0)$.
\end{lemma}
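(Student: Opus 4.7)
The plan is to analyze $LAut(\mathcal{G}(M),\mathbf 0)$ through the forgetful homomorphism $\pi\colon LAut(\mathcal{G}(M),\mathbf 0)\to S_3$ that sends each signed permutation matrix (as guaranteed by Theorem~\ref{theo:linealtopermutation}) to its underlying unsigned permutation. Under this lens, linear edge-transitivity is, essentially by definition, equivalent to requiring that $\pi(LAut(\mathcal{G}(M),\mathbf 0))$ be a transitive subgroup of $S_3$; the only transitive subgroups of $S_3$ are $A_3$ and $S_3$, and each of them contains a $3$-cycle.

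For the $(\Leftarrow)$ direction, I would argue that if $P\in LAut(\mathcal{G}(M),\mathbf 0)$ has order $3$, then $\pi(P)$ must itself be a $3$-cycle. Indeed, if $\pi(P)$ were the identity, $P$ would be diagonal with $\pm 1$ entries and thus satisfy $P^2=I$; if $\pi(P)$ were a transposition, a direct computation shows that $P^2$ is diagonal and $P^4=I$, so the order of $P$ lies in $\{1,2,4\}$. Hence $\pi(P)$ is a $3$-cycle, and the three matrices $I,P,P^2$ jointly map $\mathbf e_1$ into $\pm\mathbf e_1,\pm\mathbf e_2,\pm\mathbf e_3$, establishing linear edge-transitivity.

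For the $(\Rightarrow)$ direction, I would pick $g\in LAut(\mathcal{G}(M),\mathbf 0)$ with $\pi(g)$ a $3$-cycle, whose existence follows from the transitivity observation above. Writing $g$ as a signed cyclic matrix with sign entries $a,b,c\in\{\pm 1\}$, a direct calculation yields $g^3=abc\,I$, so $g$ itself has order $3$ whenever $abc=1$, and we are done in that case.

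The main (and only real) subtlety is the case $abc=-1$, where $g$ has order $6$ instead. Here I would invoke the fact that $-I$ is always an element of $LAut(\mathcal{G}(M),\mathbf 0)$: the identity $(-I)M=M(-I)$ combined with Theorem~\ref{theo:PM=MQ} shows that $\mathbf v\mapsto -\mathbf v$ is a linear automorphism of every $\mathcal{G}(M)$, reflecting that $\pm\mathcal B_n$ is closed under negation. Consequently $-g\in LAut(\mathcal{G}(M),\mathbf 0)$ retains the same $3$-cycle underlying permutation but now satisfies $(-g)^3=-g^3=I$, giving the required signed permutation matrix of order $3$.
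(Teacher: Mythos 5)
Your proof is correct and follows essentially the same route as the paper: reduce to signed permutation matrices, observe that only elements whose underlying permutation is a $3$-cycle can make the action on $\{\pm\mathbf e_1,\pm\mathbf e_2,\pm\mathbf e_3\}$ transitive, and then repair the order-$6$ case. The only (cosmetic) differences are that you organize the case analysis via the projection to $S_3$ and the classification of its transitive subgroups --- which is arguably tighter than the paper's informal ``composition of two of them generates a sign change or an element of order 3 or 6'' --- and that you pass from an order-$6$ element $g$ to the order-$3$ element $-g$ using $-I\in LAut(\mathcal{G}(M),\mathbf 0)$, whereas the paper takes $g^2$; both repairs are valid.
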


\begin{proof}
If such a signed permutation exists, it is clear that $\mathcal{G}(M)$ is linearly edge-transitive.\par

For the reciprocal, by Theorem \ref{theo:linealtopermutation} the automorphism is a signed permutation matrix.
We can check that signed permutations matrices of dimension 3 can have orders 1, 2, 3, 4 and 6.
The identity is the only signed permutation matrix of order 1 and it does not contribute to symmetry.
Moreover, the signed permutation matrices which only change signs (that is, which are diagonal matrices) do no contribute to symmetry.
Any remaining signed permutation matrix of orders 2 and 4 do not provide symmetry by themselves,
since they fix one of the components,
and the composition of two of them generates either a sign change or a signed permutation matrix of
order 3 or 6.\par

Hence linear edge-transitivity implies the existence of an automorphism $f\in LAut(\mathcal{G}(M),\mathbf 0)$ with order 3 or 6. If it has order 3, we already have the desired matrix.
Otherwise we have $f^3=-id$ and so $g=f^2$ has order 3.
\end{proof}

Hence, if $\mathcal{G}(M)$ is linearly edge-transitive then $LAut(\mathcal{G}(M),\mathbf 0)$
contains at least one of the next four cyclic groups as a subgroup and by Theorem \ref{theo:PM=MQ}
there is a matrix $P$ such that $PM=MQ$ for some $Q$.


\begin{align*}
	P_1&=\begin{pmatrix}0&0&1\\1&0&0\\0&1&0\end{pmatrix}&
	P_2&=\begin{pmatrix}0&0&1\\-1&0&0\\0&-1&0\end{pmatrix}\\
	P_3&=\begin{pmatrix}0&0&-1\\1&0&0\\0&-1&0\end{pmatrix}&
	P_4&=\begin{pmatrix}0&0&-1\\-1&0&0\\0&1&0\end{pmatrix}
\end{align*}

These signed permutation matrices have characteristic and minimum polynomial $\lambda ^3 -1$.
We can find some matrices (symbolic over 3 integer parameters)
whose Cayley graphs are edge-transitive by taking $Q=P$, that is, we obtain $M_i$ such that $P_iM_i=M_iP_i$. They are:

\begin{align*}
M_1&=\begin{pmatrix}a&c&b\\b&a&c\\c&b&a\end{pmatrix},&
M_2&=\begin{pmatrix}a&-c&-b\\b&a&-c\\c&b&a\end{pmatrix},\\
M_3&=\begin{pmatrix}a&-c&-b\\b&a&c\\c&-b&a\end{pmatrix},&
M_4&=\begin{pmatrix}a&c&b\\b&a&-c\\c&-b&a\end{pmatrix}.
\end{align*}

Next, we find the similar matrices.

\begin{lemma}\label{lem:similarclasses}
There are exactly 2 similarity classes with characteristic polynomial $\lambda ^3 -1$:
$$
Q_1=\begin{pmatrix}1&0&0\\0&-1&1\\0&-1&0\end{pmatrix}\text{ and }
Q_2=\begin{pmatrix}1&0&1\\0&-1&1\\0&-1&0\end{pmatrix}.
$$
\end{lemma}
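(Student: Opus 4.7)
The plan is to apply the two Newman structure theorems to reduce an arbitrary $Q$ with characteristic polynomial $\lambda^3-1$ to a canonical form, and then enumerate the remaining freedom by hand. Over $\mathbb{Q}$, the polynomial factors as $\lambda^3-1=(\lambda-1)(\lambda^2+\lambda+1)$, a linear factor times an irreducible quadratic. Theorem~\ref{theorem:newmanreducible} then shows that $Q$ is similar to a block upper triangular matrix $\begin{pmatrix}1 & w^T\\ 0 & B\end{pmatrix}$ with $B\in\mathbb{Z}^{2\times 2}$ of characteristic polynomial $\lambda^2+\lambda+1$. Since a root of this polynomial generates the Eisenstein integers $\mathbb{Z}[\omega]$, a principal ideal domain of class number $1$, Theorem~\ref{theorem:newmanirreducible} makes $B$ unique up to similarity. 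Fixing the companion matrix $B_0=\begin{pmatrix}-1&1\\-1&0\end{pmatrix}$ as representative and conjugating by a suitable block-diagonal unimodular matrix brings $Q$ to the canonical form $C_w=\begin{pmatrix}1 & w^T\\ 0 & B_0\end{pmatrix}$ for some $w=(x,y)^T\in\mathbb{Z}^2$.

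Next, I determine when $C_w\sim C_{w'}$. A short computation shows $\ker(C_w-I)=\mathbb{Z}\mathbf{e}_1$ for every $w$, because $\det(B_0-I)=3\neq 0$ forces the last two coordinates of any fixed vector to vanish. Consequently any $U\in GL_3(\mathbb{Z})$ realizing $U^{-1}C_wU=C_{w'}$ must satisfy $U\mathbf{e}_1=\pm\mathbf{e}_1$, so $U$ itself has block upper triangular shape $\begin{pmatrix}\varepsilon & u^T\\ 0 & V\end{pmatrix}$ with $\varepsilon=\pm 1$ and $V\in GL_2(\mathbb{Z})$. Expanding $C_wU=UC_{w'}$ yields $VB_0=B_0V$, forcing $V$ into the centralizer of $B_0$, and $\varepsilon w'^T=u^T(I-B_0)+w^TV$. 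The centralizer of $B_0$ inside $\mathbb{Z}^{2\times 2}$ is $\mathbb{Z}[B_0]\cong\mathbb{Z}[\omega]$, whose unit group is the six-element set $\{\pm I,\pm B_0,\pm B_0^2\}$.

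Hence the similarity classes of the $C_w$ are in bijection with the orbits of this six-element group acting by right multiplication on the finite quotient of $\mathbb{Z}^{1\times 2}$ by the row span of $I-B_0$. Since $\det(I-B_0)=3$, this quotient has exactly three elements, represented by $(0,0)$, $(0,1)$, and $(0,2)$. A direct check on the generators shows that $(0,0)$ is fixed while $\{(0,1),(0,2)\}$ forms a single orbit, producing exactly two similarity classes, with representatives $C_{(0,0)}=Q_1$ and $C_{(0,1)}=Q_2$. The main obstacle is ensuring that the canonical reduction in the first step is respected by every similarity in $GL_3(\mathbb{Z})$; this is handled via the $1$-eigenspace rigidity noted above, after which the orbit count is elementary.
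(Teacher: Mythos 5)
Your proof is correct, and while it opens the same way as the paper's (factor $\lambda^3-1$ over $\Q$, invoke Theorem~\ref{theorem:newmanreducible} to reach a block upper triangular form with the companion matrix $B_0$ of $\lambda^2+\lambda+1$ in the lower block, and observe that the three candidates for the top row correspond to the index-$3$ quotient determined by $I-B_0$), it diverges in how the three candidates are sorted into similarity classes. The paper handles this by two ad hoc computations: a direct calculation showing that any matrix conjugating $Q_1$ to $Q_2$ would have determinant divisible by $3$, plus an explicit conjugating matrix identifying the second and third candidates. You instead prove a structural statement: since $\ker(C_w-I)=\Z\mathbf e_1$ for every $w$ (because $\det(B_0-I)=3\neq 0$), any unimodular similarity between canonical forms is itself block upper triangular, its lower block lies in the centralizer $\Z[B_0]\cong\Z[\omega]$, and hence must be one of the six units $\{\pm I,\pm B_0,\pm B_0^2\}$; the classes then become orbits of this group on the $3$-element quotient, and the orbit count $\{(0,0)\}$, $\{(0,1),(0,2)\}$ gives exactly two classes in one stroke. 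Your route is slightly longer but buys a complete description of \emph{all} similarities among the canonical forms (so both the upper bound and the non-similarity of $Q_1$ and $Q_2$ follow from a single uniform argument), and it would adapt directly to other reducible characteristic polynomials; the paper's computations are shorter but give no information beyond the specific pair checked. One shared, minor gap in both arguments: each tacitly places the $1\times1$ diagonal block first in the Newman reduction, which is harmless here since a primitive integer $1$-eigenvector can always be completed to a basis of $\Z^3$, but neither you nor the paper says so.
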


\begin{proof}
For $\lambda^3-1=(\lambda-1)(\lambda(\lambda+1)+1)$ we have the following upper triangular block matrix
which has it as its characteristic polynomial:
$Q=\begin{pmatrix}1&0&0\\0&-1&1\\0&-1&0\end{pmatrix}$.
We know that
$$
\begin{pmatrix}1&v&u\\0&1&0\\0&0&1\end{pmatrix}
\begin{pmatrix}1&u+2v&u-v\\0&-1&1\\0&-1&0\end{pmatrix}
\begin{pmatrix}1&-v&-u\\0&1&0\\0&0&1\end{pmatrix}
=
\begin{pmatrix}1&0&0\\0&-1&1\\0&-1&0\end{pmatrix}$$
So
	$\forall u,v\in\mathbb Z,\
\begin{pmatrix}1&u+2v&u-v\\0&-1&1\\0&-1&0\end{pmatrix}
\sim
\begin{pmatrix}1&0&0\\0&-1&1\\0&-1&0\end{pmatrix}
	$.
Since $|\det(\begin{pmatrix}-1&-2\\-1&1\end{pmatrix})|=3$,
by Theorem \ref{theorem:newmanreducible},
we have at most 3 matrices modulo similarity, which are:
$$
\begin{pmatrix}1&0&0\\0&-1&1\\0&-1&0\end{pmatrix}\text{, }
\begin{pmatrix}1&0&1\\0&-1&1\\0&-1&0\end{pmatrix}\text{ and }
\begin{pmatrix}1&0&2\\0&-1&1\\0&-1&0\end{pmatrix}.
$$
We check that the first two are non-similar.
If
$$
\begin{pmatrix}1&0&0\\0&-1&1\\0&-1&0\end{pmatrix}
\begin{pmatrix}a&b&c\\d&e&f\\g&h&i\end{pmatrix}
=
\begin{pmatrix}a&b&c\\d&e&f\\g&h&i\end{pmatrix}
\begin{pmatrix}1&0&1\\0&-1&1\\0&-1&0\end{pmatrix}
$$
then
$$
\begin{pmatrix}a&b&c\\-d+g&-e+h&-f+i\\-d&-e&-f\end{pmatrix}
=
\begin{pmatrix}a&-b-c&a+b\\d&-e-f&d+e\\g&-h-i&g+h\end{pmatrix}.
$$
Hence $d=g=0$ and $a=-3b$; and $3b$ divides the determinant, which cannot be a unit.
Now we see that the last two are similar.
$$
\begin{pmatrix}1&0&1\\0&-1&1\\0&-1&0\end{pmatrix}
\begin{pmatrix}1&0&1\\0&0&1\\0&-1&1\end{pmatrix}
=
\begin{pmatrix}1&0&1\\0&0&1\\0&-1&1\end{pmatrix}
\begin{pmatrix}1&0&2\\0&-1&1\\0&-1&0\end{pmatrix}
$$
So we have proved that there are exactly 2 similarity classes
with characteristic polynomial $\lambda^3-1$:
$$
Q_1=\begin{pmatrix}1&0&0\\0&-1&1\\0&-1&0\end{pmatrix}\text{ and }
Q_2=\begin{pmatrix}1&0&1\\0&-1&1\\0&-1&0\end{pmatrix}.
$$
\end{proof}

Finally, we explore the $4\cdot 2=8$ possible matrices from all the combinations.

\begin{lemma}\label{lem:similarities} With the previous definitions, $P_1\sim Q_2\sim P_2\sim P_3 \sim P_4$.
\end{lemma}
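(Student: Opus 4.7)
The plan is to reduce the whole five-term similarity chain to the single claim $P_1 \sim Q_2$. All of $P_1, P_2, P_3, P_4$ share the characteristic polynomial $\lambda^3 - 1$, so by Lemma \ref{lem:similarclasses} each of them is similar to either $Q_1$ or $Q_2$. I would first show that $P_2, P_3, P_4$ are each similar to $P_1$ via signed diagonal conjugations and then rule out $P_1 \sim Q_1$; by transitivity of $\sim$ this forces all five matrices to lie in the same similarity class, namely that of $Q_2$.

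For the reduction I would set $D_2 = \mathrm{diag}(1,-1,1)$, $D_3 = \mathrm{diag}(1,1,-1)$, and $D_4 = \mathrm{diag}(1,-1,-1)$. Each $D_i$ is a signed permutation matrix, hence a unit. Since conjugation by $\mathrm{diag}(d_1,d_2,d_3)$ rescales the $(i,j)$-entry by $d_i/d_j$, a quick verification shows $D_i P_1 D_i^{-1} = P_i$, i.e.\ $P_i D_i = D_i P_1$, giving $P_i \sim P_1$ for $i = 2, 3, 4$.

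For $P_1 \sim Q_2$ I would use the block structure of $Q_1$. Since $Q_1$ is block-diagonal with blocks $(1)$ and the $2 \times 2$ block of $\lambda^2 + \lambda + 1$, any integral $A$ similar to $Q_1$ must make $\Z^3$ into a direct sum $L \oplus L'$ of $A$-invariant sublattices of ranks $1$ and $2$ on which $A$ acts as the identity and with minimal polynomial $\lambda^2 + \lambda + 1$, respectively. Over $\Q$ the primary decomposition $\Q^3 = \ker(A-I)_\Q \oplus \ker(A^2+A+I)_\Q$ is unique, and a short argument (using that the only fixed vector in $L'$ is $0$ and that $A^2+A+I$ acts as $3\cdot\mathrm{id}$ on $L$) forces $L = \ker(A-I) \cap \Z^3$ and $L' = \ker(A^2+A+I) \cap \Z^3$. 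Hence $A \sim Q_1$ implies $L + L' = \Z^3$. For $A = P_1$, a direct computation gives that $P_1^2 + P_1 + I$ is the all-ones matrix, so $L = \Z(1,1,1)^t$ and $L' = \{\mathbf v \in \Z^3 : v_1+v_2+v_3 = 0\}$. Every element of $L + L'$ has coordinate sum divisible by $3$, so $[\Z^3 : L+L'] = 3$, whence $P_1 \not\sim Q_1$ and therefore $P_1 \sim Q_2$.

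The main obstacle is the second step, i.e.\ separating the two similarity classes with the same characteristic polynomial $\lambda^3-1$. A concrete alternative to the index argument is simply to exhibit an explicit unimodular $U$ satisfying $P_1 U = U Q_2$; for instance $U = \begin{pmatrix} 1 & -1 & 1 \\ 1 & 0 & 1 \\ 1 & 0 & 0 \end{pmatrix}$ has $\det U = -1$ and works. This bypasses the sublattice computation, but the invariant argument has the virtue of explaining why $Q_1$ cannot occur and of applying to $P_2, P_3, P_4$ directly should one wish to avoid the preliminary reduction via the matrices $D_i$.
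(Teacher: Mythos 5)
Your proposal is correct, and I verified the two computational claims you leave implicit: with $U=\left(\begin{smallmatrix}1&-1&1\\1&0&1\\1&0&0\end{smallmatrix}\right)$ one indeed gets $P_1U=UQ_2=\left(\begin{smallmatrix}1&0&0\\1&-1&1\\1&0&1\end{smallmatrix}\right)$ and $\det U=-1$; and the diagonal conjugations $D_iP_1D_i^{-1}=P_i$ check out (your $D_2=\mathrm{diag}(1,-1,1)$ is $-1$ times the paper's choice, which conjugates identically). The part of your argument handling $P_1\sim P_2\sim P_3\sim P_4$ coincides with the paper's. Where you genuinely diverge is the step $P_1\sim Q_2$: the paper simply writes down the unimodular intertwiner above, whereas your primary route is indirect --- you compute the similarity invariant $[\Z^3:(\ker(A-I)\cap\Z^3)+(\ker(A^2+A+I)\cap\Z^3)]$, which equals $1$ for $Q_1$ but $3$ for $P_1$ (since $P_1^2+P_1+I$ is the all-ones matrix), rule out $P_1\sim Q_1$, and invoke the two-class count of Lemma \ref{lem:similarclasses} to land on $Q_2$. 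Both are valid; the paper's direct verification is shorter and self-contained, while your invariant argument explains structurally \emph{why} the class of $Q_1$ is unavailable (the fixed line and the $(\lambda^2+\lambda+1)$-sublattice of $P_1$ do not span $\Z^3$) and, as you note, applies to all four $P_i$ at once without the preliminary reduction. The only thing to be careful about in the indirect version is that it leans on Lemma \ref{lem:similarclasses} being an exhaustive classification --- exactly two classes, so that excluding one forces the other --- which the paper does assert, so you are entitled to it; and your sketch of why $L$ and $L'$ must be the stated kernels is looser than necessary, since all you really need is that both kernel lattices transform by $U$ under conjugation, making the index of their sum an invariant.
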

\begin{proof}
First we see that $P_1\sim Q_2$.
$$
\begin{pmatrix}0&0&1\\1&0&0\\0&1&0\end{pmatrix}
\begin{pmatrix}1&0&0\\1&-1&1\\1&0&1\end{pmatrix}
=
\begin{pmatrix}1&0&0\\1&-1&1\\1&0&1\end{pmatrix}
\begin{pmatrix}1&0&1\\0&-1&1\\0&-1&0\end{pmatrix}
$$
And now that $P_1\sim P_2\sim P_3 \sim P_4$.
$$
\begin{pmatrix}0&0&1\\1&0&0\\0&1&0\end{pmatrix}
\begin{pmatrix}-1&0&0\\0&1&0\\0&0&-1\end{pmatrix}
=
\begin{pmatrix}-1&0&0\\0&1&0\\0&0&-1\end{pmatrix}
\begin{pmatrix}0&0&1\\-1&0&0\\0&-1&0\end{pmatrix}
$$
$$
\begin{pmatrix}0&0&1\\1&0&0\\0&1&0\end{pmatrix}
\begin{pmatrix}1&0&0\\0&1&0\\0&0&-1\end{pmatrix}
=
\begin{pmatrix}1&0&0\\0&1&0\\0&0&-1\end{pmatrix}
\begin{pmatrix}0&0&-1\\1&0&0\\0&-1&0\end{pmatrix}
$$
$$
\begin{pmatrix}0&0&1\\1&0&0\\0&1&0\end{pmatrix}
\begin{pmatrix}1&0&0\\0&-1&0\\0&0&-1\end{pmatrix}
=
\begin{pmatrix}1&0&0\\0&-1&0\\0&0&-1\end{pmatrix}
\begin{pmatrix}0&0&-1\\-1&0&0\\0&1&0\end{pmatrix}
$$
\end{proof}
Thus, the first 4 matrices with $P_iM=MQ_2$ are right equivalent to the
previously calculated $M_i$. Therefore, we find the 4 symbolic matrices $M_i'$ which satisfy $P_iM_i'=M_i'Q_1$.

\begin{align*}
M_1'&=\begin{pmatrix}a&b&c\\a&c&-b-c\\a&-b-c&b\end{pmatrix}&
M_2'&=\begin{pmatrix}a&b&c\\-a&-c&b+c\\a&-b-c&b\end{pmatrix}\\
M_3'&=\begin{pmatrix}a&b&c\\a&c&-b-c\\-a&b+c&-b\end{pmatrix}&
M_4'&=\begin{pmatrix}a&b&c\\-a&-c&b+c\\-a&b+c&-b\end{pmatrix}
\end{align*}

Now we have all the necessary elements to enunciate the tridimensional characterization of linearly edge-transitive graphs.

\begin{theorem} Let $M \in \mathbb{Z}^{3 \times 3}$ be non-singular. Then, $\mathcal{G}(M)$ is linearly edge-transitive  if and only if it is isomorphic to $\mathcal{G}(M_1)$ or $\mathcal{G}(M_1')$, where:
$$
M_1=\begin{pmatrix}a&c&b\\b&a&c\\c&b&a\end{pmatrix}\text{ or }
M_1'=\begin{pmatrix}a&b&c\\a&c&-b-c\\a&-b-c&b\end{pmatrix}
$$
for some $a,b,c\in\mathbb Z$.
\end{theorem}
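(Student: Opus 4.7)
The plan is to combine the case analysis provided by Lemmas \ref{lem:symorder3}--\ref{lem:similarities} with a conjugation argument that collapses eight candidate matrix families down to two. For the easy (backward) direction I note that by construction $P_1M_1 = M_1P_1$ and $P_1M_1' = M_1'Q_1$; Theorem \ref{theo:PM=MQ} then says that $\mathbf{x}\mapsto P_1\mathbf{x}$ is a linear automorphism of both $\mathcal{G}(M_1)$ and $\mathcal{G}(M_1')$ fixing $\mathbf{0}$, and since $P_1$ is a signed permutation of order $3$, Lemma \ref{lem:symorder3} yields linear edge-transitivity.

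For the forward direction, suppose $\mathcal{G}(M)$ is linearly edge-transitive. Lemma \ref{lem:symorder3} gives an order-$3$ signed permutation $P$ in $LAut(\mathcal{G}(M),\mathbf{0})$. Since the group is closed under inversion, I may replace $P$ by $P^{-1}$ if its underlying 3-cycle is $(1\;3\;2)$, so that the cycle is $(1\;2\;3)$; the four sign patterns compatible with $P^3=I$ then give $P\in\{P_1,P_2,P_3,P_4\}$. Theorem \ref{theo:PM=MQ} furnishes $Q$ with $PM=MQ$; since $M$ is invertible, $Q$ is $\Q$-conjugate to $P$ and has characteristic polynomial $\lambda^3-1$, so by Lemma \ref{lem:similarclasses} either $Q\sim Q_1$ or $Q\sim Q_2$. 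Invoking Lemma \ref{lem:rightsimilar}, together with the similarity $Q_2\sim P_i$ from Lemma \ref{lem:similarities} in the first case, I normalise $Q$ to $P_i$ or $Q_1$, identifying $M$ up to right equivalence with the previously computed $M_i$ or $M_i'$.

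The crux is then to collapse these eight candidates to $\mathcal{G}(M_1)$ and $\mathcal{G}(M_1')$. The key observation is that the units appearing in the proof of Lemma \ref{lem:similarities} that conjugate $P_1$ to $P_i$ for $i\in\{2,3,4\}$, namely $U_2 = \mathrm{diag}(-1,1,-1)$, $U_3 = \mathrm{diag}(1,1,-1)$ and $U_4 = \mathrm{diag}(1,-1,-1)$, are themselves signed permutation matrices. From $P_1U_i = U_iP_i$, conjugating $P_iM_i = M_iP_i$ on both sides yields $P_1(U_iM_iU_i^{-1}) = (U_iM_iU_i^{-1})P_1$, so $U_iM_iU_i^{-1}$ takes the template shape of $M_1$ with $(a,b,c)$ possibly sign-flipped. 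A parallel manipulation of $P_iM_i' = M_i'Q_1$, using $U_i$ only on the left, gives $P_1(U_iM_i') = (U_iM_i')Q_1$, placing $U_iM_i'$ in the $M_1'$ family. Theorem \ref{theo:isoleft} (left multiplication by the signed permutation $U_i$) and Theorem \ref{theo:isoright} (right multiplication by the unit $U_i^{-1}$, when applicable) then upgrade these matrix identifications to graph isomorphisms $\mathcal{G}(M_i)\cong\mathcal{G}(M_1)$ and $\mathcal{G}(M_i')\cong\mathcal{G}(M_1')$ at appropriately sign-adjusted parameters. The main obstacle will be the routine bookkeeping: for each $i\in\{2,3,4\}$ one must verify by direct $3\times 3$ multiplication that $U_iM_iU_i^{-1}$ and $U_iM_i'$ really match $M_1$ and $M_1'$ under some explicit substitution on $(a,b,c)$; these checks are short since each $U_i$ is diagonal, but the three cases must be handled individually.
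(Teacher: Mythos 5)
Your proposal is correct and follows essentially the same route as the paper: reduce via Lemma \ref{lem:symorder3}, Theorem \ref{theo:PM=MQ}, and Lemmas \ref{lem:rightsimilar}--\ref{lem:similarities} to the eight families $M_i$, $M_i'$, then collapse them to $M_1$ and $M_1'$ by left/right multiplication with diagonal sign matrices combined with Theorems \ref{theo:isoleft} and \ref{theo:isoright}. Your only deviations are cosmetic improvements --- you make the backward direction and the normalisation of the order-$3$ permutation to $\{P_1,\dotsc,P_4\}$ explicit, and you derive the collapsing sign matrices systematically from the conjugators of Lemma \ref{lem:similarities} rather than exhibiting the identities ad hoc.
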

\begin{proof}Let $\mathcal{G}(M)$ be linearly edge-transitive with $M\in\mathbb Z^{3\times 3}$.
By Lemma \ref{lem:symorder3}, $P$ must exist with $PM=MQ$ with $P\in\{P_1,P_2,P_3,P_4\}$.
By Lemmas \ref{lem:rightsimilar} and \ref{lem:similarclasses} there exist $M'$ and $Q$
with $M\cong M'$, $Q\in\{Q_1,Q_2\}$ and $PM'=M'Q$.

\begin{itemize}
\item  If $Q=Q_2$, then by Lemma \ref{lem:similarities} we know $M''\in\{M_1,M_2,M_3,M_4\}$,
with $PM''=M''P$, $M''\cong M$.
Now we want to see that the matrices $M_1$, $M_2$, $M_3$ and $M_4$ generate the same set
of matrices modulo graph-isomorphism.
For each $M_i$ we find a variable change and isomorphism from $M_1$ into $M_i$:

$$
\begin{pmatrix}-1&0&0\\0&1&0\\0&0&1\end{pmatrix}
M_1
\begin{pmatrix}1&0&0\\0&-1&0\\0&0&-1\end{pmatrix}
=
\begin{pmatrix}-a&c&b\\b&-a&-c\\c&-b&-a\end{pmatrix}
$$
which is $M_4$ giving $a$ the value $-a$.

$$
\begin{pmatrix}1&0&0\\0&-1&0\\0&0&1\end{pmatrix}
M_1
\begin{pmatrix}-1&0&0\\0&1&0\\0&0&-1\end{pmatrix}
=
\begin{pmatrix}-a&c&-b\\b&-a&c\\-c&b&-a\end{pmatrix}
$$
which is $M_2$ giving $a$ the value $-a$ and $c$ the value $-c$.

$$
\begin{pmatrix}1&0&0\\0&1&0\\0&0&-1\end{pmatrix}
M_1
\begin{pmatrix}1&0&0\\0&1&0\\0&0&-1\end{pmatrix}
=
\begin{pmatrix}a&c&-b\\b&a&-c\\-c&-b&a\end{pmatrix}
$$
which is $M_3$ giving $c$ the value $-c$.

\item If $Q=Q_1$, then by Lemma \ref{lem:similarities} we know $M'\in\{M_1',M_2',M_3',M_4'\}$.
Now we want to see that the matrices $M_1'$, $M_2'$, $M_3'$ and $M_4'$ generate the same set
of matrices modulo graph-isomorphism.
For each $M_i$ we find an isomorphism from $M_1$ into $M_i$; we do not need in this case variable changes:
$$
M_1'
=\begin{pmatrix}1&0&0\\0&-1&0\\0&0&1\end{pmatrix}M_2'
=\begin{pmatrix}1&0&0\\0&1&0\\0&0&-1\end{pmatrix}M_3'
=\begin{pmatrix}1&0&0\\0&-1&0\\0&0&-1\end{pmatrix}M_4'
$$
\end{itemize}
\end{proof}

\ifelsarticle{%
\bibliographystyle{elsarticle-num}
\section*{References}
}{%
\bibliographystyle{plain}
}
\bibliography{main}

\end{document}